\newtheorem{theorem}{Theorem}[section]
\newtheorem{lemma}[theorem]{Lemma}
\newtheorem{cor}[theorem]{Corollary}
\newcommand{\blue}{\color{black}}
\begin{document}

\newcommand{\bz}{{\bf z}}
\newcommand{\vv}{{\bf v}}
\newcommand{\ww}{{\bf w}}
\newcommand{\yy}{{\bf y}}
\newcommand{\xx}{{\bf x}}
\newcommand{\nn}{{\bf n}}
\newcommand{\uu}{{\bf u}}
\newcommand{\mm}{{\bf m}}
\newcommand{\qq}{{\bf q}}
\newcommand{\OO}{\mathbb{O}}
\newcommand{\II}{\mathbb{I}}
\newcommand{\IR}{\mathbb{R}}
\newcommand{\IC}{\mathbb{C}}
\newcommand{\IB}{\mathbb{B}}
\newcommand{\IZ}{\mathbb{Z}}
\newcommand{\half}{\frac{1}{2}}
\newcommand{\halff}{1/2}
\newcommand{\bea}{\begin{eqnarray*}}
\newcommand{\eea}{\end{eqnarray*}}
\newcommand{\beaq}{\begin{eqnarray}}
\newcommand{\eeaq}{\end{eqnarray}}
\newcommand{\bfmu}{\mbox{\boldmath $\mu$ \unboldmath} \hskip -0.05 true in}
\newcommand{\bfnu}{\mbox{\boldmath $\nu$ \unboldmath} \hskip -0.05 true in}
\newcommand{\bfxi}{\mbox{\boldmath $\xi$ \unboldmath} \hskip -0.05 true in}
\newcommand{\bfphi}{\mbox{\boldmath $\phi$ \unboldmath} \hskip -0.05 true in}
\newcommand{\beq}{\begin{equation}}
\newcommand{\eeq}{\end{equation}}
\newcommand{\bfomega}{\mbox{\boldmath $\omega$ \unboldmath} \hskip -0.05 true in}
\newcommand{\bfeta}{\mbox{\boldmath $\eta$ \unboldmath} \hskip -0.05 true in}
\newcommand{\bfepsilon}{\mbox{\boldmath $\epsilon$ \unboldmath} \hskip -0.05 true in}
\newcommand{\bftheta}{\mbox{\boldmath $\theta$ \unboldmath} \hskip -0.05 true in}
\newcommand{\om}{\omega}
\newcommand{\Om}{\Omega}
\newcommand{\bom}{\bfomega}
\newcommand{\III}{\rm III}
\def\tab{ {\hskip 0.15 true in} }
\def\vtab{ {\vskip 0.1 true in} }
 \def\htab{ {\hskip 0.1 true in} }
 \def\ntab{ {\hskip -0.1 true in} }
 \def\vtabb{ {\vskip 0.0 true in} }

\newtheorem{thm}[theorem]{Theorem}

\begin{center}
{\bf \LARGE Bootstrapping Globally Optimal Variational Calculus Solutions}
\end{center}

\begin{center}
Gregory S. Chirikjian \\
National University of Singapore \\
November 12, 2022
\end{center}

\abstract{
Whereas in a coordinate-dependent setting the Euler-Lagrange equations establish necessary conditions for
solving variational problems in which both the integrands of functionals and the resulting paths
are assumed to be sufficiently smooth, uniqueness and global optimality are generally hard to
prove in the absence of convexity conditions, and often times they may not even exist.
This is also true for variational problems on Lie groups, with the Euler-Poincar\'{e} equation establishing
necessary conditions. The difficulties compound when integrands and/or the optimal paths are not sufficiently regular,
since in this case the classical necessary conditions no longer apply. This article therefore reviews several
nonstandard cases where unique globally optimal solutions can be guaranteed, and establishes a
``bootstrapping'' process to build new globally optimal variational solutions on larger spaces from existing
ones on smaller spaces. Surprisingly, it is possible to prove global optimality in some nonconvex cases
where even the regularity conditions required for classical necessary conditions do not hold.
This general theory is then applied to several topics such as optimal framing of curves in three-dimensional
Euclidean space, optimal motion interpolation, and optimal reparametrization of video sequences to compare salient actions.
}



\section{Introduction}

In classical Variational Calculus, the Euler-Lagrange (E-L) Equations provide necessary conditions for local
optimality for functionals that have integrands that are sufficiently regular, under the assumption that the extremal
solutions are also sufficiently regular. Stronger necessary conditions for local optimality such as those due to Jacobi
and Legendre are also well known
\cite{vabrechtken,vaewing}. But in many cases testable sufficient conditions
for global optimality cannot be established unless the integrand is convex.
Otherwise, global optimality may not exist, may not be unique, or may not be provable.
A simple example of this is geodesics on the sphere. The Euler-Lagrange equations yield great arcs as the solution,
but for most pairs of points on the sphere there are two great arcs connecting them, one of which is globally minimal,
and one of which takes the long way around. When the points are antipodal, all great arcs connecting them are globally
optimal, hence there is a severe lack of uniqueness. In negatively curved (hyperbolic) spaces it is well known that
geodesics generated by solving the Euler-Lagrange equations are globally optimal and unique \cite{cheeger,hyperbolic}.
But this is a relatively rare situation, and uses specific geometric arguments. In contrast, the formulation presented
here will rely minimally on geometry, yet will have implications for geometric problems.

A broad class of variational problems for which minimality of the solution can be guaranteed is when the functional is
convex in both the generalized coordinates and their rates \cite{vagruver,vaecon}. But this is a tall order, since
even in mechanics where the Lagrangian is the integrand of the action functional, joint convexity of the Lagrangian in
coordinates and their rates is rare. Hence, it is well known that the ``Principle of Least Action'' is a misnomer.
It is really the ``Principal of Extremal Action'' since the Euler-Lagrange equations are generated and solved but little
attention is given as to whether the resulting solution is globally minimal.

\subsection{The Euler-Lagrange Equations}
\label{e-l-subsec}

Perhaps the most trivial variational integral for which the E-L equations provide unique globally optimal solutions  is that the arclength functional for curves in the Euclidean plane. The general single-degree-of-freedom variational problem is that of finding a function $y=y(x)$ such that the functional
\begin{equation}
J[y] \,=\, \int_{x_1}^{x_2} f(y(x),y'(x),x) \,dx
\label{ELfunc}
\end{equation}
is extremized
(where $y' = dy/dx$) for a given integrand $f(\cdot)$ subject to boundary conditions
$y(x_1) = y_1$ and $y(x_2) = y_2$. For example, when
$$ f(y(x),y'(x),x) = \sqrt{1+ [y'(x)]^2} $$
the problem becomes that of finding the curve $y=y(x)$ that extremizes arclength.

The E-L equation corresponding to (\ref{ELfunc}) is
\begin{equation}
\frac{\partial f}{\partial y} -
\frac{d}{dx} \left(\frac{\partial f}{\partial y'} \right) = 0.
\label{eulerlag}
\end{equation}
In the case of the arclength functional, this becomes $y'' =0$,
which when integrated twice gives $y(x) = ax+b$, the equation of a line. Even without invoking Euclid's elements, within the class of problems with solutions
of the form $y = y(x)$, the line is optimal because any perturbation added to it that observes the boundary conditions can
only increase $J$. The details as to why this is so will be addressed shortly, and have far reaching implications for
proving global optimality in other less trivial problems.

One might argue that the above is limiting because it does not allow for the possibility of curves in the plane connecting the points $(x_1,y_1)$ and $(x_2,y_2)$ that are not functions of the form $y=y(x)$. To consider the
more general case, we would need to seek parameterized curves of the form $x=y_1(t)$ and $y=y_2(t)$, and solve a
two-degree-of-freedom variational problem. The theory for such problems is also classical. Given
$$ J[{\bf y}] \,=\, \int_{t_1}^{t_2} f(y_1,...,y_N,\dot{y}_{1},...,\dot{y}_{N},t) \,dt $$
where $\dot{y}_{i} = dy_i/dt$, the set of E-L equations for $i=1,...,N$ are
\begin{equation}
\frac{\partial f}{\partial y_i} -
\frac{d}{dt} \left(\frac{\partial f}{\partial \dot{y}_{i}} \right) = 0.
\label{eulerlag1}
\end{equation}
This assumes that $f: \mathbb{R}^N \times \mathbb{R}^N  \times [t_1,t_2] \,\longrightarrow\, \mathbb{R}$ is sufficiently
smooth for all derivatives to exist \underline{and} the existence of extremal trajectories that are smooth enough for their
evaluation in the E-L equations to make sense. Two special classes of problems will be considered later in which: 1) $f$ is
not even differentiable, and hence the E-L equations do not apply, and
2) the Lavrentiev phenomenon occurs in which $f$ is regular but the resulting global minimizer is not,
and therefore does not satisfy the E-L equations. Yet global optimality can be guaranteed in both cases.

Returning to the curve in the Euclidean plane connecting two points, the arclength functional can be written as
$$ f(y_1,y_2,\dot{y}_{1},\dot{y}_{2},t) \,=\,
\sqrt{[\dot{y}_{1}]^2 + [\dot{y}_{2}]^2} \,, $$
and the resulting E-L equations are
$$ \ddot{y}_i = 0 \,\,\,{\rm for}\,\,\, i=1,2, $$
thus resulting in the parameterized line $y_i^*(t) = a_i t + b_i$, or
${\bf y}^*(t) = {\bf a} t + {\bf b}$ where ${\bf a}$ and ${\bf b}$ are constant vectors that provide the freedom to match the
boundary conditions. The strength of this argument relative to the previous one, again without invoking Euclid, is that it
considers the possibility of curves in the plane that connect two points and are not necessarily plotted as a
function $y=y(x)$.

More generally, given two points in a region of a manifold parameterized by coordinates $\qq = [q_1,...,q_N]^T$,
and metric tensor $M(\qq)$, the E-L equations (with $q_i$ taking the place of $y_i$) and the functional
\begin{equation}
f(\qq,\dot{\qq},t) = \sqrt{\dot{\qq}^T \,M(\qq)\, \dot{\qq}}
\label{kinen}
\end{equation}
give necessary conditions for a geodesic.
The square root is somewhat inconvenient when performing computations, and justification for eliminating it will be given shortly.

A metric tensor can arise in purely geometric contexts, e.g., for surfaces such as Minkowski sums of
elliposoids \cite{shiffman}, or it can arise from the kinetic energy of a system of $P$ particles with positions
${\bf x}_i \in \IR^3$, masses $m_i$, and generalized coordinates $\qq \in \IR^N$ (with $N \leq 3P$) in which case it is well known from classical mechanics that the entries of the mass metric tensor become
$$ m_{ij}(\qq) \,=\, \sum_{k=1}^{P} m_k \frac{\partial {\bf x}_k}{\partial q_i} \cdot
\frac{\partial {\bf x}_k}{\partial q_j} \,. $$
Hence the following well-known results from mechanics and a simple theorem will be helpful.

In Mechanics, the Lagrangian is defined to be the difference of kinetic and potential energy\footnote{Explicit dependence of the functional on $t$ is relatively rare in the
mechanics of particles and rigid bodies and in geometry, and henceforth is dropped from the discussion at this point. It will
be reintroduced in the discussion of the Lavrentiev phenomenon that follows later.}
$$ L(\qq,\dot{\qq}) \,=\,T(\qq,\dot{\qq}) \,-\, V(\qq) $$
and the total energy is
$$ E(\qq,\dot{\qq}) \,=\,T(\qq,\dot{\qq}) \,+\, V(\qq) \,. $$
The kinetic energy for a mechanical system always has the form
\begin{equation}
T(\qq,\dot{\qq}) \,=\, \half \dot{\qq}^T \,M(\qq)\, \dot{\qq} \,.
\label{kedef}
\end{equation}
The mass matrix of a mechanical system, $M(\qq)$, satisfies the required properties of a metric tensor.

Lagrange's equations of motion are the Euler-Lagrange equations (\ref{eulerlag1}) with $L$ in place of $f$, and $q_i$ in place of $y_i$, and they describe the motion of conservative systems. That is, if $\qq^*(t)$ denotes the solution to Lagrange's equations, it can be shown that energy is conserved:
\begin{equation}
E(\qq^*,\dot{\qq}^*) \,=\, const \,.
\label{energy-const}
\end{equation}
This classical fact will be used in the theorem presented below.

\begin{lemma}\label{phi-var}
Suppose $T$ is known a priori to be limited to the range $[x_1,x_2]$ where $0 < x_1 < x_2 < \infty$
and let $\Phi: [x_1,x_2] \rightarrow \IR_{>0}$ be a twice differentiable monotonically increasing function. Then the solution to the variational problems with a functional of the form
$f(\qq,\dot{\qq}) = \Phi(T(\qq,\dot{\qq}))$ will be the same as the solution to Lagrange's equations in the absence
of a potential energy term.\footnote{This result might be known, but the author could not find a clear statement
of it other than in the case when $\Phi(x) = c\sqrt{x}$ which is mentioned in several works
including \cite{brockett,DoCarmo,Dubrovin}.}
\end{lemma}

\begin{proof}
From the chain rule
$$ \frac{\partial f}{\partial q_i} = \Phi'(T) \frac{\partial T}{\partial q_i}
\,\,\,{\rm and} \,\,\, \frac{\partial f}{\partial \dot{q}_i} = \Phi'(T) \frac{\partial T}{\partial \dot{q}_i} \,, $$
and
$$  \frac{d}{dt} \left(\frac{\partial f}{\partial \dot{q}_i}\right) = \Phi''(T) \dot{T} \frac{\partial T}{\partial \dot{q}_i} +
\Phi'(T) \frac{d}{dt}\left(\frac{\partial T}{\partial \dot{q}_i}\right) \,. $$
But for a mechanical system with $V=0$, $T = const$ from (\ref{energy-const}), and so $\dot{T}=0$. Therefore
$$ \Phi'(T) \left[\frac{d}{dt} \left(\frac{\partial T}{\partial \dot{q}_i}\right) - \frac{\partial T}{\partial q_i} \right] \,=\,
\frac{d}{dt} \left(\frac{\partial f}{\partial \dot{q}_i}\right) - \frac{\partial f}{\partial q_i} = 0 \,. $$
Since $\Phi$ is monotonically increasing on the interval over which it is defined and continuous (since it is twice differentiable), it is also invertible. Moreover, $\Phi'(T)  > 0$. Therefore the variational problems for $f$ and $T$ are interchangeable.
\end{proof}
\noindent
In particular, restricting to when $T_0 > 0$ and letting $\Phi(x) = \sqrt{2x}$, we can solve the geodesic problem without the pesky square root, as if it were a mechanics problem.

The usefulness of this result is now illustrated with the line in the plane
where it is easier to prove the global optimality
for the integrand $T$ than the integrand $f = \Phi(T)$ in the original arclength functional. Starting with the variational solution ${\bf y}^*(t) = {\bf a} t + {\bf b}$ and letting ${\bf y}(t) = {\bf y}^*(t) + \bfepsilon(t)$, we see that the cost becomes
\beq
\int_{t_1}^{t_2} \|\dot{\bf y}\|^2 dt = \int_{t_1}^{t_2} \left\{
\|{\bf a}\|^2 + 2 {\bf a} \cdot \dot{\bfepsilon} + \|\dot{\bfepsilon}\|^2
\right\} \, dt
\,=\,
(t_2-t_1) \|{\bf a}\|^2 + 2 {\bf a} \cdot \int_{t_1}^{t_2} \dot{\bfepsilon} \,dt \,+\,
\int_{t_1}^{t_2} \|\dot{\bfepsilon}\|^2 \, dt
 \,.
 \label{epsiloneffect}
 \eeq
But since the perturbed solution must satisfy the same boundary conditions,
$$ \int_{t_1}^{t_2} \dot{\bfepsilon} \, dt = \bfepsilon(t_2) - \bfepsilon(t_1) = {\bf 0} - {\bf 0} = {\bf 0}. $$
From this it is clear that adding $\bfepsilon$ to ${\bf y}^*$ can only increase cost in (\ref{epsiloneffect}).
The same argument does not apply to the cost $\int_{t_1}^{t_2} \|\dot{\bf y}\| \, dt$ because the square root prevents the integral from acting directly on $\dot{\bfepsilon}$. That said, the Cauchy-Schwarz inequality provides
the bound
$$ \int_{t_1}^{t_2} \|\dot{\bf y}\| \, dt \,\leq\, \sqrt{t_2 - t_1}  \left(\int_{t_1}^{t_2} \|\dot{\bf y}\|^2 \, dt\right)^{\half} \,. $$
Substituting the minimal solution for the problem with integrand $\|\dot{\bf y}\|^2$
into the right hand side then provides an upper bound for the minimal solution to the variational problem with integrand $\|\dot{\bf y}\|$. And substituting the minimal solution for the problem with integrand $\|\dot{\bf y}\|^2$ into the cost with integrand $\|\dot{\bf y}\|$ provides an upper bound on the cost of the 
global minimizer for that problem. In Section 2, it will be shown that this second upper bound is in fact the global minimum. 

In other words, changing the original problem by an appropriate choice of $\Phi$ can preserve the
E-L necessary conditions while making it easy to guarantee global optimality of the solution to the new problem.
Though it is not obvious using elementary arguments that the global optimality of the new problem
guarantees it for the original problem, such statements are made in differential geometric settings.
This issue will be revisited without geometry in Section \ref{suffelsec}. But first a small detour will be taken to discuss
variational problems for which the resulting optimal trajectories lack regularity (i.e., they are non-Lipschitz), and for which the E-L equations are
not necessary conditions.

\subsection{The Lavrentiev Phenomenon}
\label{irregular}

Whereas the classical Variational Calculus of Euler, Lagrange, Jacobi, etc.
assumes functionals with sufficiently smooth integrand and well-behaved resulting
extremal trajectories, over the past 100 years
variational problems with globally minimal solution have been investigated in the absence of classical regularity conditions on the integrand and/or the
globally minimal trajectories that result.

The {\it Lavrentiev phenomenon} has been of particular interest \cite{Lavrentiev,Mania,Lowen}. As an example, consider trajectories $x(t)$ that minimize the functional
\begin{equation}
J[x] = \int_{0}^{1} (x^3 - t)^2 \dot{x}^6 dt
\label{maniaex}
\end{equation}
subject to boundary conditions $x(0) = 0$ and $x(1) = 1$.
This is the Mani\'{a} example \cite{Mania}.
Though the integrand is smooth, it can be shown that the globally minimal solution is $x^*(t) = t^{1/3}$.
As this is not Lipschitzian, but is absolutely continuous, this is an example
where the infimum of the functional taken over the set of absolutely continuous trajectories is lower than that taken over Lipschitz trajectories. This is the
essence of the Lavrentiev phenomenon.

Functionals such as (\ref{maniaex}) and others that demonstrate the Lavrentiev phenomenon, such as those in \cite{Buttazzo} are remarkable in that their integrand is
regular, and yet the resulting trajectories are not \cite{Ball1}
Consequently, the E-L equations need not be necessary conditions \cite{Ball2}.
This remains an active topic of research with more such examples continuing to emerge
\cite{foss,Cellina,Ferriero}.

The following section examines an altogether different kind of variational problem with globally minimal solution. Then in Section \ref{bootstrapsec} all of these pieces from previous sections are assembled using a ``bootstrapping'' procedure in which globally minimal solutions on higher-dimensional spaces
are constructed.

\section{Nonstandard Case Studies in Global Optimality}
\label{suffelsec}

Though the Euler-Lagrange equations provide necessary conditions
for local optimality under the conditions that: 1) $f$ is sufficiently smooth; and 2) a sufficiently smooth solution exists,
they provide no guarantee that such a solution will be globally optimal. So-called second-order necessary conditions provide more confidence that the solution to
the E-L equations might be minimal, but also provide no guarantee. In the absence of specific a priori
knowledge of the properties of the functional, such as convexity, it is usually quite difficult to say anything other than that the solution generated by the E-L equations are extremals (rather than global minima or maxima), unless additional arguments can be employed as in the case of the line in the previous section.

That said, in certain special situations, the structure of the function
$f(\cdot)$ will guarantee global optimality of the solution generated by the
the Euler-Lagrange equations. Here special cases are presented in which global optimality of solutions of variational problems
are guaranteed when satisfying the Euler-Lagrange necessary conditions. The emphasis here is on this sense of the word `global' in contrast
to global descriptions of geometry or dynamics which can be found elsewhere \cite{cheeger,Lee}.

\subsection{Global optimality in 1D case} \label{1Dglobal}

Consider the set of monotonically increasing bijective functions that map the closed interval $[0,1]$ into itself and are twice differentiable on the open interval $(0,1)$ and satisfy the boundary conditions $y(0) = 0$ and $y(1) = 1$.
Such functions include but are not limited to $y(x) = x^p$ for any positive power $p \in \IR_{>0}$. The set of all such functions are closed under function inversion and function composition $(y_1 \circ y_2)(x) = y_1(y_2(x))$,
and have an identity element, $y_{id}(x) = x$. The result is an infinite dimensional group.
In this section a variational problem is formulated that selects the one element of this group
that globally minimizes a cost functional.

There exists a very general functional in the one-dimensional
case for which the global minimality of the solution to the Euler-Lagrange
equation is guaranteed. Specifically, let
$$ f(y,y',x) =  m(y) (y')^2 $$
where $m:[0,1]\,\longrightarrow\,\mathbb{R}_{>0}$ is differentiable and let
\begin{equation}
J[y] = \int_{0}^{1}  m(y) (y')^2 dx \,.
\label{Jy}
\end{equation}
Global minimization of this sort of functional has been examined informally in \cite{stochastic,harmonic}. A more rigorous treatment is given below.

\subsubsection{Globally Minimal Reparametrization}

\begin{lemma} \label{mainlemma}
Let $m_1$ and $m_2$ be real numbers with $0 < m_1 < m_2 < \infty$ and let $m:[0,1] \,\rightarrow\, [m_1,m_2]$ be differentiable. Let $m^{\half}(s)$ and $m^{-\half}(s)$ respectively be shorthand for $\sqrt{m(s)}$ and $1/\sqrt{m(s)}$. The solution to the resulting Euler-Lagrange equations for (\ref{Jy}) subject to the boundary conditions $y(0)=0$ and $y(1)=1$ satisfies the implicit equation
\begin{equation}
y^*(x) = \frac{\int_{0}^{x} m^{-\half}(y^*(s)) ds}{\int_{0}^{1} m^{-\half}(y^*(s)) ds}
\label{ystar}
\end{equation}
and $y^*(x)$ can be obtained explicitly by inverting the function $F:[0,1]\,\rightarrow\,\left[\,0,\sqrt{m_2}\,\right]$ defined by
$$ F(y) \,\doteq\, \int_{0}^{y} m^{\half}(s) ds $$
in the expression
\begin{equation}
F(y^*(x))  = F(1) \, x \,.
\label{ystar333}
\end{equation}
Moreover
\begin{equation}
\int_{0}^{1} m^{-\half}(y^*(s))\,ds \,=\, 1/F(1) \,.
\label{c1eqk42b4}
\end{equation}
\end{lemma}
\begin{proof}
Recognizing that $m'=(dm/dy)y'$, the E-L equation corresponding to (\ref{Jy}) will be
$$ m y'' + \half m' y' = 0 \,. $$
Multiplying through by $m^{-\half}(x) = 1/\sqrt{m(x)}$, the result then can be written as
$$ \left(m^{\half} y'\right)' = 0 \,. $$
The solution is then of the form
\begin{equation}
m^{\half}(y^*) \frac{dy^*}{dx} = c_1
\label{keyc1}
\end{equation}
where $c_1$ is a constant of integration,
and so
$$ y^*(x) = c_2 + c_1 \int_{0}^{x} m^{-\half}(y^*(s)) ds \,. $$
Since $y(0)=0$ and $y(1)=1$ this becomes (\ref{ystar}).

Alternatively,
$$ m^{\half}(y^*) {dy^*} = c_1 {dx} $$
can be integrated to give
$$ F(y^*) = c_1 x\,.$$
The condition $F(y^*(1)) = c_1$ together with the boundary condition $y^*(1)=1$ then gives (\ref{ystar333}).
That is, $c_1 = F(1) \in \left[\,\sqrt{m_1},\sqrt{m_2}\,\right]$. But since the same $c_1$ appears in the normalization of the implicit expression,
(\ref{c1eqk42b4}) results.
\end{proof}

The following theorem proves that the result of the above lemma is not only a solution to the
E-L equations, but that it is globally optimal. Moreover, this global optimality generalizes to cases where the E-L equations no longer apply due to a lack of sufficient regularity, i.e., the case when the function $m$ is not differentiable.

\begin{thm} \label{1Dglobthm}
Let all of the conditions of Lemma \ref{mainlemma} hold, except that the function $m:[0,1]\,\rightarrow\,[m_1,m_2]$ need only be continuous rather than differentiable. Then the implicit solution (\ref{ystar}) and explicit solution (\ref{ystar333}) globally minimize (\ref{Jy})
among all differentiable functions subject to the boundary conditions $y(0) = 0$ and $y(1) = 1$.
\end{thm}
\begin{proof}
(\ref{ystar}) and (\ref{ystar333}) both satisfy
(\ref{keyc1}) without envoking the E-L equations.
Substituting $y^*(x)$ into (\ref{Jy}) and observing (\ref{keyc1}) gives
$$ J[y^*] = \int_{0}^{1}  \left(m^{\half}(y^*) \frac{dy^*}{dx}\right)\left(m^{\half}(y^*) \frac{dy^*}{dx}\right) dx \,=\, c_1^2 $$
where $c_1 = F(1)$. The continuity and boundedness of $m$ ensures that $F$ is
bounded, differentiable, and monotonically increasing. This
together with the form of (\ref{ystar}) or (\ref{ystar333})
ensures differentiability of $y^*$.
Since $c_1^2 = \left(\int_{0}^{1} c_1 dx\right)^2$,
$$ J[y^*] = \left(\int_0^1 m^{\half}(y^*(x)) \frac{dy^*}{dx} dx\right)^2
= \left(\int_0^1 m^{\half}(y^*) {dy^*}\right)^2.$$
The above is simply a change of variables of integration and
has nothing to do with the details of (\ref{ystar}) other than
the fact that $y^*(0)=0$ and $y^*(1) =1$ and ${dy^*}/{dx} > 0$, which is true even though $m$ may not be monotonic, convex, or differentiable.
Consequently, the value of the integral in the above expression for
$J[y^*]$ is {\it independent} of the path $y^*(x)$.
That is,
$$ J[y^*] = \left(\int_0^1 m^{\half}(y) {dy}\right)^2 $$
can be computed without reference to $y^*(t)$
since the name of the variable of integration is irrelevant.
Using the Cauchy-Schwarz inequality
$$ \int_{0}^{1} a(x) b(x) dx \,\leq\, \left(\int_{0}^{1} |a(x)|^2 dx\right)^{\half}
\left(\int_{0}^{1} |b(x)|^2 dx\right)^{\half} $$
with $a(x) = m^{\half}(y(x)) {dy}/dx$ and $b(x) = 1$ then gives
\beq J[y^*] \leq J[y] \eeq
{\it for every possible} differentiable $y(x)$ that satisfies the boundary conditions.
\end{proof}

What is remarkable about this simple example is that none of the classical conditions stated in variational calculus regarding sufficient conditions for optimality apply. This is not a hyperbolic space,
the functional is not convex, etc. And in addition to all of this, the
regularity conditions on $m$ needed for the E-L equations to hold can be relaxed.  Moreover, as will be seen later, from
this simple example it is possible to bootstrap up
to {\blue cases in which $y(t)$ is replaced by a path defined in a multi-dimensional space and where
global optimality is still guaranteed.}

\subsubsection{Reparameterizing an Extremal Solution Only Makes Things Worse}

Given $T = \frac{1}{2} \dot{\qq}^T M(\qq) \dot{\qq}$ and $V=0$, with optimal solution $\qq^*(t)$ that conserves
kinetic energy $T(\qq^*,\dot{\qq}^*) = T_0$, it is clear that arclength along such a trajectory
is
$$ s(t) \,=\, \int_{0}^{t} \sqrt{\dot{\qq}^*(t')^T M(\qq^*(t')) \dot{\qq}^*(t')}\, dt' \,=\, \int_{0}^{t} \sqrt{2T_0} \, dt'
\,=\, \sqrt{2T_0} \, t\,. $$
If we were to attempt to reparameterize by substituting $\qq^*(t) \,\rightarrow \, \qq^*(\tau(t))$, with
$\tau$ being the same sort of monotonically increasing function on $[0,1]$ discussed earlier, the resulting cost would be
$$ \int_{0}^{1} \sqrt{\dot{\qq}^*(\tau(t))^T M(\qq^*(\tau(t))) \dot{\qq}^*(\tau(t))} \dot{\tau}^2(t) \,dt \,=\, \sqrt{2T_0} \int_{0}^{1} \dot{\tau}^2(t) \, dt \,. $$
The optimal $\tau(t)$ in this context is $\tau^*(t) = t$ in which case
$\int_{0}^{1} (\dot{\tau}^*)^2(t) \, dt = 1$. For any other $\tau(t) = t + \epsilon(t)$, the
result will be
$$ \int_{0}^{1} \dot{\tau}^2(t) \, dt \,=\, \int_{0}^{1} (1 + \dot{\epsilon})^2 \, dt \, =\,
1+ \int_{0}^{1} \dot{\epsilon}^2 \, dt \,, $$
which only can increase cost.

\subsection{Implications for Solving Auxiliary Variational Problems}

A result of combining Lemma \ref{phi-var} and Theorem \ref{1Dglobthm} is
\begin{cor}
Let $T$ be as in (\ref{kedef}). Then
the trajectory $\qq^*(t)$ that minimizes\footnote{$J_1[\qq]$ and $J_2[\qq]$ are not functions of
$\qq$, but rather are functionals, the value of which is completely specified by the path $\qq=\qq(t)$.}
$$ J_2[\qq] \doteq \int_{0}^{1} 2T(\qq,\dot{\qq}) \, dt $$
globally subject to the end constraints $\qq(0)$ and $\qq(1)$ will also minimize
$$ J_1[\qq] \doteq \int_{0}^{1} \sqrt{2T(\qq,\dot{\qq})} \, dt $$
globally subject to the same end constraints.
That is, for any twice differentiable trajectory $\qq(t)$ satisfying the boundary conditions
\begin{equation}
J_2[\qq^*] \leq  J_2[\qq] \,\,\,\Longrightarrow\,\,\, J_1[\qq^*] \leq  J_1[\qq] \,.
\end{equation}
Moreover
\beq
J_2[\qq^*] = (J_1[\qq^*])^2 \,.
\label{J2eqJ1abc}
\eeq
\end{cor}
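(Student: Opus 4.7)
The plan is to combine Theorem \ref{phi-var}, the Cauchy-Schwarz inequality, and an arclength reparameterization in order to upgrade ``$\qq^*$ is a $J_2$-minimizer'' into ``$\qq^*$ is a $J_1$-minimizer.'' First, I would invoke Theorem \ref{phi-var} with $\Phi(x)=\sqrt{2x}$ (valid on any interval bounded away from $0$, which is the relevant regime when the motion is nontrivial) to observe that the Euler-Lagrange equations associated with $J_1$ and those associated with $J_2$ both reduce to Lagrange's equations for $T$ with $V=0$. In particular, the given $J_2$-minimizer $\qq^*$ satisfies these equations, so by the energy conservation statement (\ref{energy-const}), $T(\qq^*,\dot{\qq}^*)\equiv T_0$ is constant along $\qq^*$.

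Next I would apply Cauchy-Schwarz to factor the square root out of the $J_1$ integrand. For any admissible $\qq(t)$,
$$J_1(\qq)^2 \,=\, \left(\int_0^1 \sqrt{2T(\qq,\dot{\qq})}\cdot 1\,dt\right)^{2} \,\leq\, \int_0^1 2T(\qq,\dot{\qq})\,dt \cdot \int_0^1 1\,dt \,=\, J_2(\qq),$$
with equality precisely when $2T(\qq,\dot{\qq})$ is constant in $t$. Along $\qq^*$ this equality holds by the first paragraph, which immediately yields the ``moreover'' assertion (\ref{J2eqJ1abc}): $J_2(\qq^*) = (J_1(\qq^*))^2$.

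To finish, I would reparameterize an arbitrary admissible comparison path $\qq$ by (normalized) arclength. Setting $L=J_1(\qq)$ and $s(t)=\int_0^t\sqrt{2T(\qq,\dot{\qq})}\,dt'$, define $\tilde{\qq}(u)=\qq(t(Lu))$ for $u\in[0,1]$. Then $\tilde{\qq}$ satisfies the same boundary conditions as $\qq$, the integrand $2T(\tilde{\qq},\dot{\tilde{\qq}})$ is identically $L^2$, and a direct change of variable gives $J_1(\tilde{\qq})=J_1(\qq)$ together with $J_2(\tilde{\qq})=L^2=J_1(\qq)^2$. Now chain the inequalities using global $J_2$-minimality of $\qq^*$:
$$J_1(\qq^*)^2 \,=\, J_2(\qq^*) \,\leq\, J_2(\tilde{\qq}) \,=\, J_1(\qq)^2,$$
and taking square roots gives $J_1(\qq^*)\leq J_1(\qq)$, as claimed. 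The main obstacle is purely technical: the arclength reparameterization requires $\dot{\qq}\neq 0$ almost everywhere so that $s(t)$ is strictly increasing and smooth enough to stay in the admissible class. Paths that stall on an interval can be handled either by excising such intervals and rescaling time, or by a routine approximation argument, since any such stalling path can be perturbed to a nowhere-stalling one with arbitrarily small change in both $J_1$ and $J_2$.
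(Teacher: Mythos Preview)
Your proof is correct and follows essentially the same skeleton as the paper's: reparameterize an arbitrary competitor to constant speed, use that $J_1$ is reparameterization-invariant while $J_2$ drops to $J_1^2$ on constant-speed curves, and then chain $J_1(\qq^*)^2 = J_2(\qq^*) \leq J_2(\tilde{\qq}) = J_1(\qq)^2$.

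The one noteworthy difference is packaging. The paper frames this corollary as a combination of Theorems~\ref{phi-var} and~\ref{1Dglobthm}: it restricts $2T$ to the given path, views it as a one-dimensional functional $m(\tau)\dot\tau^2$, and then invokes Theorem~\ref{1Dglobthm} to produce the optimal reparameterization $\tau^*$ and the identity $J_2(\qq(\tau^*)) = (J_1(\qq(\tau^*)))^2$. You instead apply Cauchy--Schwarz directly to get $J_1(\qq)^2 \le J_2(\qq)$ with equality at constant speed, and then write down the arclength reparameterization by hand. Since the proof of Theorem~\ref{1Dglobthm} is itself a Cauchy--Schwarz argument, and since its optimizer $y^*$ is exactly the normalized-arclength map, the two arguments are the same underneath; yours is simply a self-contained inlining of that theorem. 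What the paper's route buys is emphasis on the ``bootstrapping'' theme (the multi-dimensional result reduces to the already-proved 1D case); what your route buys is a shorter, standalone argument that doesn't require the reader to unpack Theorem~\ref{1Dglobthm}.
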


\begin{proof}
If $\Phi(x) = \sqrt{x}$, then by Lemma \ref{phi-var} the E-L equations for both functionals are the same
and since the boundary conditions are the same, the resulting solutions will both be $\qq^*(t)$.

Let $\qq = \qq(t)$ be an arbitrary differentiable trajectory on $[0,1]$ that satisfies the end constraints, and let $\tau(t)$ be a reparameterization of time. Then $(\qq \circ \tau)(t) = \qq(\tau(t))$ will also be a feasible trajectory. Suppressing the temporal dependence on $\tau$, this will be referred to as $\qq(\tau)$. Let
$$ m(\tau) \,=\, \frac{d\qq}{d\tau}^T M(\qq) \,\frac{d\qq}{d\tau} \,. $$
Then $2T(\qq(\tau), \frac{d\qq}{d\tau} \dot{\tau}) = m(\tau) \dot{\tau}^2$.
If $\tau^*$ is the global minimizer of this one-dimensional functional, and if $\qq^*(t)$ is the global minimizer of
the functional $J_2$, then
\beq
J_2[\qq^*] \,\leq\, J_2[\qq(\tau^*)] \,\leq\, J_2[\qq] \,.
\label{posineq}
\eeq
Recall that the differentiable trajectory $\qq(t)$ is arbitrary up to the same boundary conditions, and $\qq(\tau^*(t))$ is the same path in configuration space parameterized differently by time from $\qq(t) = \qq(\tau(t)=t)$.

From Theorem \ref{1Dglobthm}, we know that $\qq(\tau^*(t))$ will give
\beq
J_2[\qq(\tau^*)] = (J_1[\qq(\tau^*)])^2 \,.
\label{J2eqJ1}
\eeq
This is true for any differentiable $\qq(\tau)$ satisfying the end constraints and evaluated at $\tau = \tau^*(t)$. The significance of (\ref{J2eqJ1}) is that
since both $J_1$ and $J_2$ are positive, then for any given $\qq(\tau)$ they must both be minimized by the same $\qq(\tau^*(t))$.

In the case when
$\qq = \qq^*$ (the postulated global minimal solution for $J_2$), reparameterizing the temporal variable results in $\tau^*(t) = t$
since by definition there is no way to reparameterize to improve the global minimum. Therefore,
$J_2[\qq^*(\tau^*)] = J_2[\qq^*]$.

On the other hand, the value of the integral
$$ J_1[\qq(\tau)] = \int_{0}^{1} \sqrt{m(\tau)} \, \dot{\tau} \, dt \,=\,
\int_{0}^{1} \sqrt{m(\tau)} \, d\tau $$
is independent of the choice of the differentiable monotonically increasing bijective function $\tau:[0,1] \rightarrow [0,1]$ regardless of which path $\qq(t)$ is taken. That is, for any valid $\tau(t)$,
$$ J_1[\qq(\tau)] = J_1[\qq] \,. $$
Therefore, $J_1[\qq^*(\tau)] = J_1[\qq^*]$ for every such $\tau$, including $\tau^*$ and
\beq
J_1[\qq^*] \,\leq\, J_1[\qq(\tau^*)] \,=\, J_1[\qq] \,.
\label{fromposineq}
\eeq
Moreover, from (\ref{J2eqJ1}), $J_1$ is globally minimized on the same global minimizer of $J_2$, namely $\qq^*(t)$ and the minimal values are related as (\ref{J2eqJ1abc}).
\end{proof}

\subsection{The Poincar\'{e} Half Space Model}

As an example where the E-L equations provide a globally minimal solution, consider the Poincar\'{e} Half Space Model, which is a Riemannian space of constant negative curvature.

Given an $N\times N$ metric tensor $M(\qq) = [m_{ij}(\qq)]$ and its inverse using the raised index notation $[M(\qq)]^{-1} = [m^{ij}(\qq)]$, the Christoffel symbols are computed as
\begin{equation}
\Gamma^{i}_{\,jk} \doteq \frac{1}{2} \sum_{l=1}^{N} m^{il} \left(
\frac{\partial m_{lj}}{\partial q_k} +
\frac{\partial m_{lk}}{\partial q_j} -
\frac{\partial m_{jk}}{\partial q_l} \right)\,,
\label{christ}
\end{equation}
and from these the
Riemannian curvature tensor is computed as
\begin{equation}
R^{i}_{\,jkl} \doteq -\frac{\partial \Gamma^{i}_{\,\,jk}}{\partial q_l}
+ \frac{\partial \Gamma^{i}_{\,\,jl}}{\partial q_k} +
\sum_{m=1}^{N} (- \Gamma^{m\,}_{\,\,jk} \Gamma^{i}_{\,\,ml} +
\Gamma^{m\,}_{\,\,jl} \Gamma^{i}_{\,\,mk}) \,.
\label{curve}
\end{equation}
The notation of raising and lowering indicies gives
$$ R_{hjkl} = \sum_{i=1}^{N} m_{hi} R^{i}_{\,jkl} \,. $$
A space is said to have constant curvature if \cite{eisenhart} 
\begin{equation}
R_{hjkl} \,=\, K_0 \left(m_{hk} m_{jl} - m_{hl} m_{jk}\right)\,.
\label{curveconstsjsvv}
\end{equation}

Hyperbolic space defined by constant negative curvature at every point is an example where it is known
that geodesics connecting any two points are unique and globally length minimizing. The Poincar\'{e} half space model of hyperbolic space is the open half space
$$ \mathbb{H}^+ \,\doteq\, \{\xx \in \IR^N \,,\, {\bf x} \cdot {\bf e}_N > 0\} $$
endowed with the metric tensor
$$ M(\xx) =  ({\bf x} \cdot {\bf e}_N)^{-2} \, \II\,. $$
For example, in the 2D case this consists of point $(x_1,x_2>0) \in \IR^2$ with
$$ \dot{\bf x}^T M(\xx) \dot{\bf x} = \frac{\dot{x}_1^2 + \dot{x}_2^2}{x_2^2} \,. $$
In this case it can be shown that (\ref{curveconstsjsvv}) holds with
\begin{equation}
K_0 \,=\, -1 \,.
\label{curveconstsjsvvfffe}
\end{equation}
The family of geodesics in this case is parameterized by the group $PSL(2,\IR) \,\doteq\, SL(2,\IR)/\{\II,-\II\}$
where
$$ SL(2,\IR) \,=\, \left\{\left.\left(\begin{array}{cc} a & b \\ c & d \end{array}\right) \,\right|\, ad - bc \,=\,1\right\}\,. $$

Let $i = \sqrt{-1}$ and let $a,b,c,d$ be real numbers satisfying $ad - bc = 1$.
Define the curve in the complex plane
$$ z^*(t) = \frac{b + ia e^t}{d + i c e^t} = x_1^*(t) + i x_2^*(t) \,. $$
That is, $x_1^*(t) = Re(z^*(t))$ and  $x_2^*(t) = Im(z^*(t))$. (Optimality properties of this curve will be demonstrated shortly, and $*$ denotes this, i.e., it is not Hermitian conjugation).
Explicitly,
\begin{equation}
x_1^*(t) = (bd + ac e^{2t})(d^2 + c^2 e^{2t})^{-1} \,\,\,{\rm and}\,\,\,
x_2^*(t) = e^{t} (d^2 + c^2 e^{2t})^{-1} \,.
\label{sknddwww}
\end{equation}
The signed curvature of these curves as viewed in the Euclidean plane, computed using the general formula
\begin{equation}
k(t) =
\frac{\dot{x}_1 \ddot{x}_2 - \dot{x}_2 \ddot{x}_1}{\left(\dot{x}_1^2 + \dot{x}_2^2\right)^{3/2}} \end{equation}
is $k^*(t) = -2cd$, which is constant, indicating a clockwise bending circular arc.
Since these are circles, they can be written as
\begin{equation}
(x_1^* - x_0)^2 + (x_2^*)^2 = r^2
\end{equation}
where $r = 1/|k^*| = 1/(2cd)$ and
$$ x_0 = \frac{1}{2}\left(\frac{b}{d} + \frac{a}{c}\right) \,. $$

It can be shown that on these circles
\begin{equation} \frac{(\dot{x}_1^*)^2 + (\dot{x}_2^*)^2}{(x_2^*)^2} = 1, \end{equation}
and consequently $t$ is arclength in the hyperbolic metric.

Moreover, the Euler-Lagrange equations for the functional
\begin{equation}
J = \int_{t_0}^{t_1} \frac{\dot{x}_1^2 + \dot{x}_2^2}{x_2^2} dt
\label{Jdefhyp}
\end{equation}
can be written as
\begin{eqnarray}
\frac{d}{dt}\left(\frac{\dot{x}_1}{x_2^2}\right) \,&=&\, 0 \label{firsthypergo} \\
\ddot{x}_2 \,&=&\, x_{2}^{-1} \left(\dot{x}_2^2 - \dot{x}_1^2\right) \,.
\end{eqnarray}
$x_i^*(t)$ given above satisfy these equations.
Consequently (\ref{sknddwww}) is an extremal of (\ref{Jdefhyp}).
In particular, (\ref{firsthypergo}) gives
$$ \frac{\dot{x}_1^*}{(x_2^*)^2} = 2cd \,. $$

From the fact that ${\dot{x}_1^*}/{(x_2^*)^2}$ is constant, it is clear that if $\epsilon_1(t)$
is a nonzero arbitrary differentiable function
that vanishes at $t=t_0$ and $t=t_1$, it must be that
$$ \int_{t_0}^{t_1} \frac{(\dot{x}_1^*)^2 + (\dot{x}_2^*)^2}{(x_2^*)^2} dt
< \int_{t_0}^{t_1} \frac{(\dot{x}_1^*+\dot{\epsilon}_1)^2 + (\dot{x}_2^*)^2}{(x_2^*)^2} dt $$
based on the same argument as for the line. That is, when expanding the term in parenthesis, the resulting
linear term in $\dot{\epsilon}$ integrates to zero.

Moreover, since these curves $\xx^*(t)$ are already parameterized by arclength, replacing them with
$\yy(t) = \xx^*(\tau(t))$ where $\tau(t) \neq t$ is monotonically increasing and  $\tau(t_0) = t_0$ and $\tau(t_1)  = t_1$ can only increase the value of $J$.
Consequently,
$$ \int_{t_0}^{t_1} \frac{(\dot{x}_1^*)^2 + (\dot{x}_2^*)^2}{(x_2^*)^2} dt
< \int_{t_0}^{t_1} \frac{\dot{y}_1^2 + \dot{y}_2^2}{y_2^2} dt \,. $$
Writing $\yy(t) = \xx^*(t) + \bfnu(t)$ (where there is one degree of freedom in $\bfnu(t)$ following from the
freedom of the scalar function $\tau(t)$) and combining the above two inequalities provides evidence for the following statement.

If $\epsilon_1(t)$ and $\epsilon_2(t)$ are two independent nonzero arbitrary differentiable functions
that vanish at $t=t_0$ and $t=t_1$, then
$$ \int_{t_0}^{t_1} \frac{(\dot{x}_1^*)^2 + (\dot{x}_2^*)^2}{(x_2^*)^2} dt
< \int_{t_0}^{t_1} \frac{(\dot{x}_1^* + \dot{\epsilon}_1)^2 + (\dot{x}_2^* + \dot{\epsilon}_2)^2}{(x_2^* + \epsilon_2)^2} dt \,, $$
indicating that (\ref{sknddwww}) are globally minimal geodesics.

%
%

{\blue
\subsection{Special Higher Dimensional Vectorial Cases}
}

In the previous subsection, the Poincar\'{e} half-plane model was used.
Another model of the hyperbolic plane is the Poincar\'{e} disk model in which
$$ f(\xx,\dot{\xx}) = \dot{\xx}^T M(\xx) \dot{\xx} \,\,\,{\rm where} \,\,\,
M(\xx) \,=\, \frac{4}{1-\|\xx\|^2} \II \,.$$
Extensions of the Poincar\'{e} half-plane and disk models to higher dimensions exist. For example,
the above expression for $M(\xx)$ holds in $n$ dimensions. The $n$-dimensional half-space model
has $M(\xx) = x_{n}^{-2} \II$. From geometric considerations it is known that geodesics in these spaces
are global minimizers of length.

All of the above are of the form
$$ f(\xx,\dot{\xx}) = m\left(\sum_{i=1}^{N} a_i x_i^2\right) \sum_{j=1}^{N} b_j \dot{x}_j^2 $$
where $b_j = 1$ and $a_i \in \{0,1\}$.
That is, the mass matrix consists of a scalar mass function, $m(\cdot)$, which depends on coordinates
in a particular way, which multiplies the identity matrix. Therefore, functions of the form above
are a reasonable canididate for exploring wider conditions on the parameters $\{a_i\}$ and $\{b_j\}$
such that global minimality of the variational problem is guaranteed.

Another very special {\blue vectorial} case is when there are coordinates $\{{q_i}'\}$ such that
$$ f(\qq',\dot{\qq}',t) = \sum_{i=1}^{N} \lambda_i({q_i}') ({\dot{q}_i}')^2 $$
where each $\lambda_i$ is a continuous function on the interval $[0,1]$ that takes finite positive values.
Then global optimization in each coordinate can be performed independently as
per Theorem 2.2.
Likewise, if a coordinate transformation can reduce a coupled {\blue vectorial} problem to this form,
then the global optimum can be found in each new coordinate, and the result can be transformed back.
Such cases are not the norm. That is, if the matrix
$$ M(\qq) \,=\, S^T \Lambda(S \qq) \,S $$
for some invertible constant matrix $S$, then the variational problem for
$ f(\qq,\dot{\qq},t) \doteq \dot{\qq}^T M(\qq) \,\dot{\qq} $
will have a globally minimal solution if it is defined on the region $S \cdot [0,1]^N$
because then in the coordinates $\qq' = S \qq$, everything will reduce to
$f(\qq',\dot{\qq}',t)$.

\subsection{A Suboptimal Ansatz}

For more complicated variational problems, employing numerical shooting methods to meet boundary conditions can be computationally
costly. Moreover, since globally optimal solutions might not even exist, it can sometimes be useful to obtain a fast suboptimal solution
to a variational problem rather than an exact extremal.

Motivated by the splitting in Theorem \ref{1Dglobthm}, we can define a trajectory $\qq^{\circ}(t)$ such that for symmetric positive definite $M(\qq) \in \IR^{n\times n}$ (as in kinetic energy or metric tensor) and $R(\qq) \in SO(n)$,
$$ R^T(\qq^{\circ}) \, M^{\half}(\qq^{\circ}) \, \dot{\qq}^{\circ} = {\bf c} $$
where ${\bf c}$ is a constant vector. Then the kinetic energy $T=\frac{1}{2} \dot{\qq}^T M(\qq) \dot{\qq}$ along this path will remain constant and
\beq
\qq^{\circ}(t)  = \qq(0) + \int_{0}^{t} M^{-\half}(\qq^{\circ}(\tau)) R(\qq^{\circ}(\tau),\tau) \, {\bf c} \, d\tau \,.
\label{mhalf}
\eeq
Here $M^{\half}(\qq)$ is the matrix square root of $M(\qq)$ such that $M^{\half}(\qq)M^{\half}(\qq) = M(\qq)$
and $ M^{-\half}(\qq) = [M^{\half}(\qq)]^{-1}$ and $R(\qq,t) \in SO(n)$ is differentiable trajectory with $R(\qq(0),0) = \II$ that can be used to steer the behavior of $\qq^{\circ}(t)$
without adversely affecting the cost.

The constant vector ${\bf c}$ provides sufficient freedom to match arbitrary $\dot{\qq}(0)$ or $\qq(1)$,
but not both.\footnote{\blue As is the case in solving the true variational problem, here too in the case of the ansatz
it is not possible to specify both $\dot{\qq}(0)$ and $\qq(1)$ independently. But if a family of solutions parameterized by
varying $\dot{\qq}(0)$ is generated, then a solution within this family that matches the condition
$\qq(1)$ can be chosen.}
For example, when specifying initial conditions,
\beq
\qq^{\circ}(t)  = \qq(0) + \left(\int_{0}^{t} M^{-\half}(\qq^{\circ}(\tau)) R(\qq^{\circ}(\tau),\tau) \, d\tau \right)
\, M^{\half}(\qq(0)) \, \dot{\qq}(0) \,.
\label{mhalf22}
\eeq
{\blue from this $\dot{\qq}(0)$ can be selected so as to match $\qq(1)$ when the matrix in parenthesis is invertible.
Of course, it is not possible to specify both $\dot{\qq}(0)$ and $\qq(1)$ independently.}

In general this will not be a solution to the E-L equations for the variational problem with integrand (\ref{kedef}), unless $M(\qq) = M_0$ is constant. And as such the solution will not be optimal in general.
But since the E-L solution has $T = T_0 = const)$, and
$$  \half (\dot{\qq}^{\circ})^T M(\qq^{\circ}) \dot{\qq}^{\circ} \,=\,
\half \dot{\qq}(0)^T M(\qq(0)) \dot{\qq}(0) \,=\, T_0 \,,$$
the cost of this ansatz appears not to be so bad. Moreover its structure lends itself to rapid numerical shooting to match the distal boundary condition.

For example, consider the case of geodesics in the Poincar\'{e} half plane model. Suppose that the solution were not known and we seek a reasonable approximation. Using this ansatz with $R = \II$, the result would be
$$ \dot{x}_1^{\circ} \,=\, c_1 {x}_2^{\circ} \,\,\,{\rm and}\,\,\, \dot{x}_2^{\circ} \,=\, c_2 {x}_2^{\circ} \,. $$
This means that
$$ {x}_2^{\circ}(t) = {x}_2(0) e^{c_2 t} $$
which can be back substituted to give
$$ {x}_1^{\circ}(t) = \alpha + \beta e^{c_2 t} \,. $$
Now suppose we want to compare to the exact solution presented in the previous section.
One way to compare would be to examine how different the trajectory with the same initial conditions is.
However, a more telling comparison is to examine the true solution and ansatz approximation for given
boundary conditions. The above solutions are written in terms of boundary conditions as
\beq
{x}_2(t) \,=\, {x}_2(0) e^{t \ln({x}_2(1)/{x}_2(0))}
\eeq
which can be back substituted to give
\beq
{x}_1^{\circ}(t) = \left(x_1(0) - x_2(0) \frac{x_1(1) - x_1(0)}{x_2(1) - x_2(0)}\right) + x_2(0) \frac{x_1(1) - x_1(0)}{x_2(1) - x_2(0)} e^{t \ln({x}_2(1)/{x}_2(0))} \,.
\eeq

Consider the boundary conditions $\xx(0)$ and $\xx(1)$ from the solutions in the previous
section:
$$ \xx^*(0) \,=\,
\left(\begin{array}{c}
(bd + ac)(d^2 + c^2)^{-1} \\
(d^2 + c^2)^{-1}
\end{array}\right)
\,\,\,{\rm and}\,\,\,
\xx^*(1) \,=\,
\left(\begin{array}{c}
(bd + ac e^2)(d^2 + c^2 e^{2})^{-1} \\
e (d^2 + c^2 e^{2})^{-1}
\end{array}\right) \,. $$


Though it meets the boundary condtions,
the energy for the ansatz is a larger constant than for the geodesics.
Introducing the rotation $R(x_1,x_2,t)$ provides freedom to reduce this energy while still meeting
the same boundary conditions. For example, suppose $R = R(t)$ defined by counterclockwise measured angle $\theta(t)$. Then
$$ \frac{\dot{x}_2^{\circ}}{{x}_2^{\circ}} = c_1 \sin\theta(t) + c_2 \cos\theta(t) $$
or
$$ {x}_2^{\circ}(t) \,=\, {x}_2^{\circ}(0) \, \exp\left(c_1 \int_{0}^{t} \sin\theta(\tau) d\tau + c_2
\int_{0}^{t} \cos\theta(\tau) d\tau \right)\,. $$
In the simplest case the rotation has one variable to use to steer the solution: $\theta(t) = \omega_0 t$,
and ${x}_1^{\circ}(t)$ can be obtained numerically.\

\section{Global Optimality and Bootstrapping} \label{bootstrapsec}

In the general {\blue vectorial} case there are many possible paths connecting the initial and final values
$\qq(0)$ and $\qq(1)$. For this reason the argument used in the 1D case earlier cannot generalize and the Euler-Lagrange equation
does not in general provide a globally optimal solution. The {\blue vectorial} extension of the integrand
in (\ref{Jy}) is of the form $\dot{\qq}^T M(\qq) \dot{\qq}$. Here
$M(\qq)$ might be the metric tensor for a Riemannian manifold, in which case the solutions of the
Euler-Lagrange equations will be equivalent to the equations giving geodesic paths. As is clear even in the
simple cases such as the spheres $S^1$ and $S^2$, the geodesics connecting any two points are not unique.
Moreover, both the shorter or the longer of the two great arcs connecting
two points are solutions to the variational problem when the points are not antipodal. And when they are,
an infinite number of great arcs connect them, illustrating that shortest paths need not be unique.

From these examples it is clear that global optimality is closely related to the statement of
the problem as a boundary value problem. If instead the problem is restated as that of finding
the path from $\qq(0)$ with specified value of $\dot{\qq}(0)$ that minimizes
the cost functional $\int_{0}^{1} f(\qq,\dot{\qq},t) \,dt$,
then the nonuniqueness of solutions of the Euler-Lagrange equations disappears.
In the case of the sphere, even the longer great arc connecting two points then will be globally optimal.
In the one-dimensional case with cost of the form
$m(y) (y')^2$ on a domain that is an interval, the paths had only one possible direction to go
and so the distinction between
the initial-value and boundary-value problems was not important.
While this is not true in general {\blue for functionals of multi-dimensional vectorial paths $\qq(t) \in \IR^N$},
special cases still do exist in which global
optimality can be proved. For example, when
$M(\qq) = M_0$ is constant, or when $m_{ij}(\qq) = m_{i}(q_i) \delta_{ij}$, then when the domain is a product of
intervals so that $\qq \in [0,1]^n$, then the resulting boundary value problem
will be globally minimized by the solution of the Euler-Lagrange equations. Global optimality is verified
by evaluating the cost of $\qq(t) = \qq^*(t) + \bfepsilon(t)$ where $\bfepsilon(0) = \bfepsilon(1) = {\bf 0}$. As with the case of the line,
the effect of adding $\bfepsilon(t)$ is that the cost will only increase relative to that for $\qq^*(t)$.

\subsection{Bootstrapping Globally Optimal Euler-Lagrange Solutions}

Let
$$ J_0[\bftheta,\qq] \,=\, \int_{0}^{1} f_0(\bftheta,\dot{\bftheta},\qq,\dot{\qq},t) dt $$
where
\beq
f_0(\bftheta,\dot{\bftheta},\qq,\dot{\qq},t) \,\doteq\,
\half \|\dot{\bftheta} - A(\qq,t)\dot{\qq}\|_W^2 \,,
\label{f1kdkhsnew}
\eeq
$\qq \in  \mathbb{R}^N$, ${\bftheta} \in \mathbb{R}^P$, and
$\|B\|_W^2 = {\rm tr}(B^T W B)$ is the square of the weighted Frobenius norm with the weighting defined by
a given constant positive definite symmetric matrix
$W=W^T >0$.

If $\qq(t)$ is fixed in advance, then the globally minimal solution for $\bftheta(t)$ is obviously defined by
$\dot{\bftheta} = A(\qq,t)\dot{\qq}$. This is true
regardless of whether E-L regularity conditions are imposed on $A$ or not.

Alternatively, if boundary conditions on $\bftheta$ are imposed and if $A$ does satisfy E-L regularity conditions
(e.g., $A:\mathbb{R}^N \times [0,1] \,\longrightarrow\, \IR^{P \times N}$ is twice continuously differentiable),
then the E-L equations in $\bftheta$ give
$$ \frac{d}{dt} \left\{W\left[\dot{\bftheta} - A(\qq,t)\dot{\qq}\right]\right\} \,=\, {\bf 0} $$
or
\beq
\dot{\bftheta} -  A(\qq,t) \dot{\qq} \,=\, {\bf a} \,,
\label{first4kn3nd}
\eeq
where the vector ${\bf a}$ is constant. Integrating again gives another constant ${\bf b}$
and the solution is $\bftheta(t) = \bftheta_{[\bf q]}^*(t \,|\, {\bf a},{\bf b})$ where
\beq
\bftheta_{[\bf q]}^*(t \,|\, {\bf a},{\bf b}) \,\doteq\, {\bf a}t + {\bf b} + \int_{0}^{t} A(\qq(\tau),\tau) \dot{\qq}(\tau) d\tau \,.
\label{first4kn3ndaa}
\eeq
Here $*$ denotes that this is the extremal solution the functional $J_0[\bftheta,\qq]$, the subscript denotes that this solution is a functional of $\qq$, and the solution is conditioned on boundary conditions specified by
${\bf a}$ and ${\bf b}$. When no boundary conditions are imposed, $\bftheta_{[\bf q]}^*(t \,|\, {\bf 0},{\bf b})$
makes $f_0 = 0$, which is the global minimum value and is independent of how the differentiable differentiable $\qq(t)$ is chosen.
When ${\bf a} \neq {\bf 0}$, $\bftheta_{[{\bf q}]}^*(t \,|\, {\bf a},{\bf b})$ has sufficient
freedom to match boundary conditions and for any fixed given ${\bf q}(t)$ this choice of $\bftheta(t)$ is again the global minimum
since a perturbation of the form $\bftheta_{[{\bf q}]}^*(t \,|\, {\bf a},{\bf b}) \,\longrightarrow\,
\bftheta_{[{\bf q}]}^*(t \,|\, {\bf a},{\bf b}) + \bfepsilon(t)$ with $\bfepsilon(0) = \bfepsilon(1) = {\bf 0}$ will only increase
the value of the quadratic functional. But when ${\bf a} \neq {\bf 0}$, the choice of $\qq(t)$ is not completely free
for reasons explained below.

Assuming sufficient regularity,
\beq
\frac{d}{dt}\left(\frac{\partial f_0}{\partial \dot{q}_i}\right) - \frac{\partial f_0}{\partial {q_i}} \,=\, \\
- \left(\frac{dA}{dt} {\bf e}_i\right)^T W(\dot{\bftheta} - A\dot{\qq})
- (A {\bf e}_i)^T W \frac{d}{dt}(\dot{\bftheta} - A\dot{\qq})
+ \left(\frac{\partial A}{\partial q_i} \dot{\qq}\right)^T W(\dot{\bftheta} - A\dot{\qq}) \,.
\label{first3kn2nd}
\eeq
Substituting in (\ref{first4kn3nd}) gives
\beq
\frac{d}{dt}\left(\frac{\partial f_0}{\partial \dot{q}_i}\right) - \frac{\partial f_0}{\partial {q_i}} \,=\, \\
- \left(\frac{dA}{dt} {\bf e}_i\right)^T W {\bf a} \,+\,
\left(\frac{\partial A}{\partial q_i} \dot{\qq}\right)^T W {\bf a} \,.
\label{jbccffe}
\eeq
Consequently, if ${\bf a} = {\bf 0}$, these equations vanish. Otherwise, equating the above expression to zero
provides a constraint on $\qq(t)$.

That said, the goal in these computations is not to derive E-L Equations in $\qq$
since $J_0[\bftheta,\qq]$ is a degenerate variational problem (in the sense
that the mass matrix is singular). Rather, the above will be used in
the ``bootstrapping'' procedure described below.

Now suppose that
\beq
f_1(\qq,\dot{\qq},t) = \half \dot{\qq}^T M(\qq) \dot{\qq}
\label{f1kdkhs}
\eeq
is given for which the corresponding Euler-Lagrange equations globally solve a variational problem
with $\qq(0)$ and $\qq(1)$ specified. Here it is shown that
this solution then can be used as a seed to generate
a globally optimal solution to a larger variational problem. This is referred to here as ``bootstrapping''.
Define
\beq f_2(\bftheta,\dot{\bftheta},\qq,\dot{\qq},t) = f_1(\qq,\dot{\qq},t)
+ f_0(\bftheta,\dot{\bftheta},\qq,\dot{\qq},t)
\label{bootst33}
\eeq
For example, $f_1$ could correspond to the situation in Subsection \ref{1Dglobal}.
This scenario is akin to the ``warped product'' defined in \cite{Burago} and ``Kaluza-Klein Lagrangian'' described
in \cite{MarsdenRatiu}, though is more general and does not rely on underlying
geometric properties, though the result has geometric implications.

In the following let
\beq
J_1[\qq] \,\doteq\, \int_{0}^{1} f_1(\qq,\dot{\qq},t) dt \,\,\,{\rm and}\,\,\,
J_2[\bftheta,\qq] \,=\, \int_{0}^{1} f_2(\bftheta,\dot{\bftheta},\qq,\dot{\qq},t) dt \,.
\label{J1def}
\eeq
\begin{thm} \label{boot1thm}
If  $f_1$ is sufficiently regular such that the E-L equations provide extremals, and if $\qq^*$ is the global
minimizing extremal of $J_1$,
then when boundary conditions are not imposed on $\bftheta$, a family of global minimizers of $J_2$ exists
and is of the form $(\qq^*,  \bftheta_{[\bf q^*]}^*(t \,|\, {\bf 0},{\bf b}))$ where
$\qq^*(t)$ is the global minimizer of $J_1$ subject to specified
$\qq(0)$ and $\qq(1)$ and ${\bf b} \in \mathbb{R}^P$ parameterizes the family.
\end{thm}

\begin{proof}
By definition, in component form
\beq
\frac{d}{dt}\left(\frac{\partial f_1}{\partial \dot{q}_i}\right) - \frac{\partial f_1}{\partial {q_i}} \,=\, {0} \,,
\label{f1ljdsc}
\eeq
or explicitly when $f_1(\qq,\dot{\qq},t) = \half \dot{\qq}^T M(\qq) \dot{\qq}$,
$$ {\bf e}_i^T M(\qq) \ddot{\qq} \,+\, \sum_k \dot{q}_k {\bf e}_i^T \frac{\partial M}{\partial q_k} \dot{\qq}
- \half \dot{\qq}^T \frac{\partial M}{\partial q_i} \dot{\qq} \,=\, {0}\,, $$
(though restricting to this form is not required in the proof).

Let the solution to the system of equations (\ref{f1ljdsc}) subject to boundary conditions $\qq(0)=\qq_0$ and $\qq(1)=\qq_1$ be
denoted as $\qq^*(t)$.

On the other hand, evaluating
$$
\frac{d}{dt}\left(\frac{\partial f_2}{\partial \dot{\theta}_i}\right) - \frac{\partial f_2}{\partial {\theta_i}} \,=\, {0} $$
for all values of $i$
simply results in (\ref{first4kn3ndaa}).

The Euler-Lagrange equations for the new system corresponding to coordinates $\{q_i\}$, written in component form, will be
$$ \frac{d}{dt}\left(\frac{\partial f_2}{\partial \dot{q}_i}\right) - \frac{\partial f_2}{\partial {q_i}} \,=\, {0} . $$
From this it is clear that all of the $f_1$ terms disappear when evaluated at $\qq = \qq^*$, and
if ${\bf a} = {\bf 0}$, then (\ref{jbccffe}) gives that $\bftheta_{[\bf q^*]}^*(t \,|\, {\bf 0},{\bf b})$
causes all other terms to vanish. This means that $(\qq^*(t), \bftheta_{[\bf q^*]}^*(t \,|\, {\bf 0},{\bf b})$  is
an E-L extremizer of $J_2$.

The global minimality of the solutions $(\qq^*(t), \bftheta_{[\bf q^*]}^*(t \,|\, {\bf 0},{\bf b})$ is guaranteed by the postulated condition that
$\qq^*(t)$ is obtained a priori, and the global minimality of $\bftheta_{[\bf q^*]}^*(t \,|\, {\bf 0},{\bf b})$ in (\ref{first4kn3ndaa}) as a solution to
a quadratic cost problem. That is, if $\qq^*(t) \,\rightarrow\, \qq^*(t) + \bfepsilon_1(t)$ it will only increase the cost of $J_1$ without
affecting the cost of $J_0$, since $\bftheta^*(t)$ is keyed to changes in $\qq(t)$. Therefore, if
$\bftheta(t) = \bftheta_{[\bf q^* + \bfepsilon_1]}^*(t \,|\, {\bf 0},{\bf b}) + \bfepsilon_2(t)$ is substituted
into the cost function, $\bfepsilon_1(t)$ will have no effect on $J_0$, which will remain zero, and the addition of
$\bfepsilon_2(t)$ only increases the cost from zero to $\int_{0}^{1} \|\dot{\bfepsilon}\|_W^2 dt$.
This does not depend on $\|\dot{\bfepsilon}\|_W$ being small. (This is the same as the argument in (\ref{epsiloneffect}) but with ${\bf a}={\bf 0}$.)
That is, the $f_0$ term in $f_2$ is uniquely zero when ${\bf a} = {\bf 0}$, and there is no way to
do better.
\end{proof}

Note that when ${\bf a} = {\bf 0}$ it is only possibly to specify boundary conditions on $\bftheta^{\,*}$ at one end.
If ${\bf a} \neq {\bf 0}$ more freedom is allowed, but in general ${\bf q}^*$ will no longer satisfy the E-L equations for
$J_2$ due to the nonzero terms contributed from $J_0$. However, in some special cases those $J_0$ terms vanish, as described below.

\begin{thm} \label{boot2thm}
Two special cases for which global minimality is guaranteed without making the restriction ${\bf a} = {\bf 0}$ are:
(1) When $A(\qq,t) = A_0$ is constant; (2)
When $N = 1$ and $A(q,t) = {\bf a}_1(q) \in \IR^{P \times 1}$ with $q$ being scalar.
The globally minimal solutions in these two cases are respectively
$({\bf q}^*, \bftheta_{{\bf q}^*}^{*}(t\,|\,{\bf a},{\bf b}))$ and $(q^*,\bftheta_{[q^*]}^{*}(t\,|\,{\bf a},{\bf b}))$
where\footnote{In (\ref{thopteq53210ddd}) the subscript ${\bf q}^*$ is used instead of $[{\bf q}^*]$ because in this case
$\bftheta_{{\bf q}^*}^{*}$ depends on ${\bf q}^*$ as a function rather than as a functional.}
 \beq \bftheta_{{\bf q}^*}^{*}(t\,|\,{\bf a},{\bf b}) \,\doteq\, {\bf a}t + {\bf b} + A_0\left({\bf q}^*(t) -
 {\bf q}^*(0)\right) \label{thopteq53210ddd} \eeq
 and
\beq \bftheta_{[q^*]}^{*}(t\,|\,{\bf a},{\bf b}) \,\doteq\, {\bf a}t + {\bf b} +
\int_{0}^{t} {\bf a}_1(q^*(s)) \dot{q}^*(s) \,ds \label{thopteq53210} \eeq
satisfies the expanded E-L equations corresponding to
(\ref{bootst33}) and can satisfy all boundary conditions $\bftheta(0)$ and $\bftheta(1)$ in addition to those in
$\qq$ or $q$.
\end{thm}

\begin{proof}
If ${\bf a} \neq {\bf 0}$ in (\ref{first4kn3nd}) then the necessary conditions established by the E-L equations become
(\ref{jbccffe}).
If $A(\qq) = A_0$ is constant, then the terms in (\ref{jbccffe}) vanish and the case 1 solution results.
This condition also can be satisfied if the dimension of $\qq$ is 1, wherein both terms in parenthesis in (\ref{jbccffe}) become the same and hence cancel.
In this second case, denote $A = {\bf a}_1$, since it
consists of one column vector, gives (\ref{thopteq53210}).
\end{proof}
Global optimality is ensured as before, by introducing a perturbation $\bfepsilon(t)$ of any size
satisifying the boundary conditions $\bfepsilon(0) = \bfepsilon(1) = {\bf 0}$.

The class of problems addressed in this section can be viewed in a slightly different way when (\ref{f1kdkhs}) holds by rewriting (\ref{bootst33}) as
\beq
f(\qq,\bftheta,\dot{\qq},\dot{\bftheta},t) = \half
\left[\begin{array}{c}
\dot{\qq} \\ \\
\dot{\bftheta} \end{array}\right]^T
\left(\begin{array}{cc}
M(\qq) \,+\, A^T(\qq,t) W A(\qq,t) & A^T(\qq,t) W \\ \\
W A(\qq,t) & W \end{array}\right)
\left[\begin{array}{c}
\dot{\qq} \\ \\
\dot{\bftheta} \end{array}\right].
\label{boot33dd}
\eeq
Consequently, a quadratic cost that can be decomposed in this way will have solutions that
inherit the global optimality from the original smaller problem under appropriate boundary conditions.
It also means that the above reasoning can be used recursively. If the matrix in (\ref{boot33dd}) is called $M'(\qq)$ and 
we define $\qq' \doteq [\qq^T, \bftheta^T]^T$, then the problem can be expanded further by adding an additional terms such
as $\|\dot{\bfphi} - A'(\qq',t) \dot{\qq}'\|_{W'}^{2}$.

\subsection{Bootstrapping Globally Optimal Solutions That Lack Regularity}

If the functional $f_1$ or the resulting global optimizers lack sufficient
regularity for the E-L equations to be applicable, the bootstrapping process
can nevertheless be used in some cases to generate globally optimal solutions
on higher dimensional spaces.

For example, if $f_1(q,\dot{q}) = m(q) \dot{q}^2$ and $m$ is as in Theorem 2.2,
then the globally minimal solution of the variational problem with functional
$$ f_2(q,\dot{q}, \theta, \dot{\theta}) = m(q) \dot{q}^2
+ [\dot{\theta} - a(q,t) \dot{q}]^2 $$
with $t \in [0,1]$
and $a:\IR \times [0,1] \rightarrow \IR$ continuous and bounded
will be $q^*(t)$ from Theorem 2.2, and $\theta^*(t) = a_0t + b_0 + \int_{0}^{t} a(q^*(s),s)\dot{q}^*(s)ds$.
This is true even if $m$ is not differentiable, and hence lacks the regularity to use the E-L equations.
Moreover, in cases such as this where $q$ and $\theta$ are both one-dimensional, when $a=a(q)$ is autonomous,
the integral defining $\theta^*$ can be rewritten as
\beq
\theta^*(t) = a_0t + b_0 + \int_{q^*(0)}^{q^*(t)} a(q) dq \,,
\eeq
making it an explicit function of $q^*(t)$.

As a second example, we can bootstrap off of the Mani\'{a} example of the Lavrentiev phenomenon in (\ref{maniaex})
and seek a global minimum of
\begin{equation}
J[x,\theta] = \int_{0}^{1} \left\{
(x^3 - t)^2 \dot{x}^6 + (\dot{\theta} - a(x,t) \dot{x})^2 \right\} \,dt
\label{lavboot}
\end{equation}
subject to boundary conditions $x(0) = 0$ and $x(1) = 1$.
We can simply write $x^*(t) = t^{1/3}$ and $\theta^*(t) = a_0 t + b_0 + \int_{0}^{t}  a(x^*(s),s) \dot{x}^*(s) ds$
for a wide variety of functions $a$, as long as the integral defining $\theta^*(t)$ exists and is finite.

That said, most of the applications that follow involve bootstrapping in the
case when the paths and functional are regular.

\section{Examples from the Theory of Framed Space Curves}

In the classical theory of curves in three-dimensional space, the Frenet-Serret Apparatus is used
to frame arclength-parametrized curves. In the first subsection of this section, a brief review of
that classical theory is given, adapted from Chapter 5 of the author's book \cite{harmonic}. This material
is classical (dating back 150 years) and can be found stated using other notation in \cite{Millman,DoCarmo}.
The material is presented here primarily because subtle differences in the notation used here relative
to the standard presentation will be convenient later.

{\blue

\subsection{The Frenet-Serret Apparatus}

Without loss of generality, every curve can be parameterized by its arclength, $s$.
Associated with every three-times-differentiable arclength-parameterized curve, ${\bf x}(s) \in \IR^3$, are two functions:
{\it curvature}, $\kappa(s)$, and {\it torsion}, $\tau(s)$. These functions
completely describe the shape of the curve up to arbitrary rigid-body motion.

As long as $\kappa(s) \neq 0$, a unique Frenet frame can be attached at every point on the curve.
The orientation of this frame relative to the world frame will be
$$ {R}_{FS}(s) \,\doteq\, [{\bf t}(s), {\bf n}(s), {\bf b}(s)] \in SO(3) $$
where ${\bf t}(s)$, ${\bf n}(s)$, and ${\bf b}(s)$ are respectively the unit tangent, normal, and binormal.
The Frenet frames, the curvature, and the torsion, together are known as the Frenet-Serret apparatus.

The way that the orientation of a Frenet frame evolves along the curve is tied to the curvature and torsion
through the system of first order differential equations\footnote{In most presentations the vectors ${\bf t}$,
${\bf n}$, ${\bf b}$ are stacked rather than being written side-by side, resulting in an equation deceptively
similar in appearance to (\ref{fsewneffewc2}) but without introducing ${R}_{FS}$.}
\begin{equation}
\frac{d{R}_{FS}}{ds} = - {R}_{FS} {\Omega_{FS} }
\label{fsewneffewc2}
\end{equation}
where ${\Omega_{FS}}$ is a special skew-symmetric matrix with only two independent nonzero entries
$${\Omega_{FS}(s) } \,=\,
\left(\begin{array}{ccc}
0 & \kappa(s) & 0 \\
-\kappa(s) & 0 & \tau(s) \\
0 & -\tau(s) & 0 \end{array} \right) \,. $$
The Frenet frame at each value of arclength is then the rotation-translation pair
$({R}_{FS}(s), {\bf x}(s)) \in SO(3) \times \IR^3$.
No group law is defined. Later in this article several possibilities are explored.
}

\subsection{Minimally Varying Frames}

The classical Frenet frames are not the only way to attach reference frames to space curves,
as pointed out by Bishop \cite{6bishop}.
Moroever, the Frenet frames are not optimal in the sense of twisting as little as possible around
the tangent as the curve is traversed. The word `twist' is somewhat problematic, as it can be
confused with torsion. Whereas torsion is a twisting of the curve, the twisting of the frame attached
to the curve does not affect the curve shape. {\blue For this reason, the word ``roll'' can be used in place
of ``twist'', which is consistent with the terminology used to describe motions of aircraft and ships.}

Two ways to modify Frenet frames are described here, which demonstrate the variational formulation from
earlier in this paper. First the optimal amount of roll along the tangent relative to the Frenet frames
is established. Next, alternative parameterizations in place of arclength of the curve that have optimality
properties are examined. Finally, the optimal set of frames and reparameterization are combined, which represents
an example of the bootstrapping described previously.

Prior to conceiving the general concept of bootstrapping globally optimal variational problems, the author
considered special cases that arose from the field of robotics \cite{mythesis,optimalpaper}, as reviewed in
\cite{harmonic,stochastic}, from which some of this presentation is adapted.

\subsubsection{Minimally Rolling Frames}

Consider the Frenet frames as a start, and allow freedom to roll around the tangent so that the new frames have
orientation
$$ R = R_{FS}\, R_1(\theta)\,\,\,{\rm where}\,\,\, R_1(\theta) \,=\, \left(\begin{array}{ccc}
1 & 0 & 0 \\
0 & \cos\theta &-\sin\theta \\
0 & \sin\theta & \cos\theta \end{array} \right) \,.$$
The optimal $\theta(s)$ will minimize the functional
\begin{equation}
J = \half \int_{0}^{1} {\rm tr}\left(\frac{dR}{ds} \frac{dR}{ds}^{T}\right)\,ds.
\label{cost11}
\end{equation}
Computing the derivatives and expanding out gives the explicit expression
\beq
\half {\rm tr}\left(\frac{dR}{ds} {\frac{dR}{ds}}^{T}\right) = \kappa^2 + \tau^2
+ 2 \frac{d\theta}{ds} {\bf e}_1 \cdot {\bfomega}_{FS} +
\left(\frac{d\theta}{ds}\right)^2 \label{ksndsdkcc} \eeq
where
$${\bfomega}_{FS} \,\doteq\, \left(\begin{array}{c}
\tau \\
0 \\
\kappa \end{array} \right). $$
This is obtained by using the properties of the trace. For example, ${\rm tr}\left(\frac{dR}{ds} {\frac{dR}{ds}}^{T}\right) =
{\rm tr}\left(R^T \frac{dR}{ds} {\frac{dR}{ds}}^{T} R\right)$ which is the trace of the product of two
skew-symmetric matrices. And
for skew-symmetric matrices $\Omega_1$ and $\Omega_2$
$$ \half {\rm tr}\left(\Omega_1 \Omega_2 \right) = {\bfomega}_1 \cdot {\bfomega}_2 $$
where $\bfomega_i$ is the unique vector such that
$$ \bfomega_i \times {\bf v} \,=\, \Omega_i {\bf v} $$
for every ${\bf v} \in \IR^3$. This relationship between $\bfomega_i$ and $\Omega_i$ is denoted as
\beq
{\bfomega}_i \,=\, (\Omega_i)^{\vee} \,\,\,{\rm and}\,\,\, \Omega_i \,=\, \hat{\bfomega}_i\,.
\label{hatveedef}
\eeq

From (\ref{ksndsdkcc}) and the expression for ${\bfomega}_{FS}$ it is clear that
\begin{equation}
\half \,{\rm tr}\left(\frac{dR}{ds} {\frac{dR}{ds}}^{T}\right) = \kappa^2 + \left(\tau
+ \frac{d\theta}{ds}\right)^2,
\label{costtwist}
\end{equation}
and the cost functional is minimized when
$$ \frac{d\theta}{ds} = -\tau. $$
Though this had been known for many decades as explained by Bishop \cite{6bishop}, it became known to
the author as part of studies on snakelike robot arms \cite{mythesis, optimalpaper}. In that work the goal
was to use a ``backbone curve'' to capture the overall shape of a physical robot made from joints and links, and
since each section of the robot was commanded to adhere to a section of the curve, by limiting the amount of change in position and orientation between reference frames at the end of each section of curve it was easier
for the robot to conform.

The above defines frames that have globally minimal roll. In some applications it is required
to satisfy conditions at the end points. Such solutions will be of the form
$$ \theta^*(s) = c_1 + c_2 s - \int_{0}^{s} \tau(\sigma)\,d\sigma $$
where $c_1$ and $c_2$ are determined by the end conditions
 $\theta(0) = \theta_0$ and $\theta(1) = \theta_1$.

 The fact that this is globally optimal follows from the same arguments as in the example of the  line (i.e., let $\theta(s) = \theta^*(s) + \epsilon(s)$ and show that this can only make things worse).

\subsubsection{Optimal Reparameterization for Least Variation}

Within the same motivating snakelike robot application, it was reasoned that
if the spacing along the curve was relaxed from being arclength, then this
would provide additional freedom for the physical robot to conform to the backbone curve.
In analogy with how the Frenet frames are not the only (or perhaps not even the most desirable) orientations
to attach to a curve, arclength may not be the best parameter to use.
In optimal reparameterization, the idea is to no longer use
the arclength, $s$, but instead use a new curve parameter $t$
such that $s(t)$ satisfies $s(0) =0$ and $s(1) = 1$ such that a
cost functional of the form
\index{reparameterization}
$$ J = \int_{0}^{1} \left\{\half \,r^2\, {\rm tr}\left(\frac{d R(s(t))}{dt}
\frac{d R^{T}(s(t))}{dt}\right) + \left(\frac{d s}{dt}\right)^2 \right\}\,dt $$
is minimized. The constant $r$ is used to introduce units of length because the units to evaluate
bending are measured in angular units whereas extension is measured in units of length.
For example, if a ball of radius $r$ and unit mass is traversing this trajectory, $J$ is a measure of the total rotational and translational kinetic energy of the ball integrated over the trajectory. If it speeds up or slows down more quickly than the optimal solution, this cost will increase. This idea will be generalized later in the paper.

The integrand in the above functional is of the form
$ m(s) ({s'})^2$ where $s' = ds/dt$  and
$$ m(s) = \half \,r^2\, {\rm tr}\left(\frac{d R}{ds}
\frac{d R^{T}}{ds}\right) + 1, $$
and therefore the 1D globally optimal solution discussed previously in Theorem \ref{1Dglobthm} applies.
Call this solution $s^*(t)$.

\subsection{Simultaneous Optimal Reparameterization and Roll Modification}

In the physical robotics problem discussed earlier, it was desirable to combine the optimal roll and curve reparametrization results. This can be done by first solving for the optimal
roll and setting $R(s)$, followed
by performing the optimal curve reparametrization, resulting in the frame $(R_{FS}(s^*(t)) R_1(\theta^*(s^*(t))),{\bf x}(s^*(t))$. A natural question to ask is if this composition of optimal solutions to subproblems optimally
solves the larger composite problem. In general one would expect composition of optimal solutions to smaller
problems to be suboptimal on the larger space. But in the case of optimal curve reparametrization and rolling,
the directions are in a sense orthogonal and commuting (local rolling and extension about/along the same vector).

Moreover, this problem of simultaneous curve reparameterization and optimal
roll distribution serves as a concrete application of the class of bootstrapping problems discussed earlier in this paper. Start with a curve initially parameterized by arclength,
$\xx(s)$ for $s \in [0,1]$, and with the Frenet frames $[\xx(s), R_{FS}(s)]$. We seek
a new set of smoothly evolving reference frames of the form
$[\xx(t), R(t)] = [\xx(s(t)), R_{FS}(s(t)) R_1(\theta(s(t)))]$, where $R_1(\theta)$
is an added roll, of the Frenet frames about the tangent together with a reparameterization $s=s(t)$.

A cost function in the two variables $(s(t),\theta(t))$ can be constructed of the form
\beaq
C &\doteq& \half \int_{0}^{1} \left\{\half r^2 {\rm tr}(\dot{R} \dot{R}^T) +
\dot{\xx} \cdot \dot{\xx} \right\} dt \\
&=& \half \int_{0}^{1} \left\{
(r^2 \kappa^2(s) + 1) \dot{s}^2 + r^2(\tau(s) \dot{s} + \dot{\theta})^2
\right\} dt.
\eeaq
This functional is the same form as in (\ref{bootst33}) with $\theta$ taking the place of $\bftheta$ and
$s$ taking the place of $\qq$. If the second term in the integral is zero, then the remaining term is a one-dimensional variational
problem with $f(s,\dot{s},t) = \half m(s) (\dot{s})^2$ for which a globally minimal solution can be obtained.
From the previous discussion about bootstrapping, Theorem \ref{boot2thm} guarantees the global optimality
of the composite problem.

What is also interesting here is that the two problems (optimal roll distribution for an arclength-parameterized curve and optimal curve reparametrization) can be be considered sequentially and then merged, and this merged solution will be the same as the joint globally optimal one.
Moreover, this example serves as a model for far more general problems with these characteristics.

\section{Global Optimality in the Lie Group Setting}

Let $G$ be a real matrix Lie group. The dimension of $G$ as a manifold is denoted as $N$, and the constituent matrices, $g \in G$, are elements of $\IR^{n\times n}$.

The variational calculus problem on matrix Lie groups can be formulated as that of finding paths
$g(t) \in G$ that extremize a functional
\begin{equation}
J = \int_{t_1}^{t_2} f(g;\bfxi;t) \,dt \label{eq:groupfunc}
\end{equation}
where $\bfxi = (g^{-1}\dot{g})^{\vee}$ is a kind of velocity, which is computed simply as the matrix product of
$g^{-1}$ and $\dot{g}$, the latter of which is not an element of $G$. It can be shown that
$g^{-1}\dot{g} \in  {\cal G}$, the Lie algebra corresponding to $G$, which has $N$
basis elements $\{E_i\}$, each of which are in $\IR^{n\times n}$. This means that
$$ g^{-1}\dot{g} \,=\, \sum_{i=1}^{N} \xi_i E_i $$
and a vector $\bfxi = [\xi_1,...,\xi_N]^T \in \IR^N$ can be assigned to each element of ${\cal G}$.
The notation $\vee$ takes elements of ${\cal G}$ and returns vectors in $\IR^N$, and the opposite is
the $\hat{(\cdot)}$ which converts $\bfxi \in \IR^N$ into $\sum_{i=1}^{N} \xi_i E_i \in {\cal G}$.
An example of this was demonstrated for ${\cal G} = so(3)$ in (43).
The choice of $\{E_i\}$ with the definition $E_i^{\vee} = {\bf e}_i$ induces an inner product on ${\cal G}$
$$ (E_i,E_j) = {\bf e}_i \cdot {\bf e}_j = \delta_{ij} $$
and fixes the metric for $G$.

Each $g_i(t) \doteq \exp(t E_i)$ is a curve in $G$ defining one parameter subgroups, where $\exp(\cdot)$ is the matrix exponential. 
These curves are geodesics relative to the metric for which $(E_i,E_j) = \delta_{ij}$.

The necessary conditions for optimality in this setting are the {\it Euler-Poincar\'{e} equations}, which
can be written as \cite{vpoincare,vholm,vbloch,stochastic}
\begin{equation}
\frac{d}{dt}\left(\frac{\partial f}{\partial \xi_i}\right) +
\sum_{j,k=1}^{N} \frac{\partial f}{\partial \xi_k} C_{ij}^{k} \,
\xi_j = \tilde{E}_{i} f.
\label{eq:biggy2}
\end{equation}
where $C_{ij}^{k}$ are the structure constants for the Lie algebra ${\cal G}$ relating the Lie bracket of basis elements as a linear combination of basis elements
$$ [E_i,E_j] = \sum_{k=1}^{N} C_{ij}^{k} E_k \,.$$
For a function $f:G \rightarrow \IR$,
$$ (\tilde{E}_{i} f)(g) \,\doteq\, \left.\frac{d}{ds} f(g \circ \exp(s E_i))\right|_{s=0} $$
is a directional derivative in the direction of the $i^{th}$ one-dimensional subgroup. Throughout this section, $\exp$ denotes
the matrix exponential, which when applied to Lie algebra elements produces Lie group elements. 
(In (\ref{eq:biggy2}) $f:G\times {\cal G} \times \IR_{\geq 0} \,\rightarrow\, \IR$, but the $\tilde{E}_{i}$
is essentially a partial derivative, hence only the dependence on $G$ is pertinent).

A version of the above {Euler-Poincar\'{e}} equation with additional end constraints imposed with Lagrange multipliers can be found in \cite{vkim,rucker} in the context of modeling DNA loops and thin steerable elastic tubes proposed for medical procedures.

\subsection{Globally Minimal Solutions of the Euler-Poincar\'{e} Equation} \label{globalLie}

Consider the integrand of a cost functional
$$ f(g,\bfxi,t) = \half (\bfxi - {\bf c})^T K (\bfxi - {\bf c}) = \half \sum_{i,j=1}^{N} K_{ij} (\xi_i - c_i)  (\xi_j - c_j) $$
with constant $K = K^T >0$, the absolute minimal value is obtained when $\bfxi = {\bf c}$. The corresponding globally minimal trajectory on the group is then defined by
$$ g^{-1} \dot{g} \,=\, \sum_{i=1}^{N} c_i E_i \,. $$
If ${\bf c}$ is a constant vector, then
$$ g(t) \,=\, g(0) \exp\left(t \sum_{i=1}^{N} c_i E_i\right) \,. $$
However, this solution is not able to satisfy arbitrary distal end condtions, such as specifying $g(1)$.
If the goal is to do that, then
the Euler-Poincar\'{e} equations of the form
\beq \frac{d}{dt}\left( \sum_{j=1}^{N} K_{ij} (\xi_j - c_j) \right) +
\sum_{j,k=1}^{N} \left( \sum_{l=1}^{N} K_{kl} (\xi_l - c_l) \right) C_{ij}^{k} \xi_j = 0
\label{ep43j3r}
\eeq
need to be satisfied. In general, this requires numerical solution, but in some cases things simplify.
In particular, consider the case when ${\bf c} = {\bf 0}$ and let
\beq S_{lj}^{i} \doteq \sum_{k=1}^{N} K_{kl} C_{ij}^{k}. \label{slji} \eeq
If $S_{lj}^{i} = -S_{jl}^{i}$ then $\sum_{j,l=1}^{N} S_{lj}^{i} \xi_l \xi_j = 0$
and (\ref{ep43j3r}) reduces to
\beq
K \dot{\bfxi} = {\bf 0} \,\, \Longrightarrow \,\, \bfxi(t)  = \bfxi(0)
\,\, \Longrightarrow \,\, g(t) = g(0) \circ e^{t \hat{\bfxi}(0)}.
\label{disappear}
\eeq
The shortest path connecting $g(0) = g_0$ and $g(1) = g_1$ is then
\beq
{g^*(t) = g_0 \circ \exp(t \cdot \log(g_{0}^{-1} \circ g_1)) }
\label{geopath}
\eeq
where $\log$ is the matrix logarithm.

However, if $S_{lj}^{i} \neq -S_{jl}^{i}$ then the path generated by solving the E-P
variational equations is generally not this geometric one. For example,
when $G=SO(3)$ and when the moment of inertia is not isotropic, the optimal path between rotations
can become very complicated. This is relevant to satellite attitude
reorientation problems, as discussed in \cite{junkins,Markley}.

\subsection{The Rotation Group $SO(3)$ and the Sphere $\mathbb{S}^2$}

\subsubsection{Euler-Poincar\'{e} Equations for $SO(3)$}

For the case when $G=SO(3)$ group elements are $3\times 3$ special orthogonal (rotation) matrices, denoted as $R$, and the Lie algebra elements are skew symmetric matrices. In particular
$R^T \dot{R} = \Omega$ corresponds to the body-fixed description of angular velocity, $\bfomega$, and the notation in (\ref{hatveedef}) is used, i.e., $\bfomega = \Omega^{\vee}$ and $\Omega = \hat{\bfomega}$.

Minimizing a functional of the form
\beq
J = \half \int_{t_1}^{t_2} (\bfomega -\bfomega_0)^T {\cal I} (\bfomega -\bfomega_0) dt
\label{so3functional}
\eeq
arises in several applications. For example, when $\bfomega_0 = {\bf 0}$ this
corresponds to minimizing kinetic energy due to rotation where ${\cal I}$ is the moment
of inertia matrix.
The Euler-Poincar\'{e} equations corresponding to the above functional are
\beq
{\cal I} \dot{\bfomega} \,+\, \bfomega \times {\cal I} (\bfomega -\bfomega_0) \,=\, {\bf 0} \,,
\label{epso3}
\eeq
and these become the classical Euler equations of motion without applied moments when $\bfomega_0 = {\bf 0}$.

Trajectory planning on $SO(3)$ is both of fundamental importance as an example of more general geometric methods
as in \cite{iserles_acta,baillieul,crouch,blochcrouch,Jurdjevic,sussmann,helmke}, as well as practical and elegant engineering and computer graphics applications such as motion interpolation \cite{5parkravani,6ravani}
and spacecraft attitute reorientation \cite{junkins,Markley}. In the subsections that follow, specific variational solutions are examined, and their minimality properties are queried.

\subsubsection{Geodesics on $SO(3)$}

The situation in (\ref{disappear}) occurs for the functional in (\ref{so3functional})
in the special case when ${\cal I} = \mathbb{I}$ and $\bfomega = {\bf 0}$ and
(\ref{geopath}) applies. That is, the geodesic path connecting $R_0,R_1 \in SO(3)$ with
${\rm tr}(R_1^T R_2) > -1$ (so that they are not rotations related by a rotation of angle $\pi$) is
\beq
{R^*(t) = R_0 \circ \exp(t \cdot \log(R_0^T R_1)).}
\label{geopathrot}
\eeq
Furthermore, this path can be shown to be globally optimal in this case
because of the structure of the cost function.
To verify this statement, suppose that $R^*(t)$ is replaced with $R(t) = R^*(t) Q(t)$ where $Q(t)$
is an arbitrary differentiable path in $G$ with end points $Q(0)=Q(1) = e$. Then
$$ R^{T} \dot{R} \,=\, (R^* Q)^{T} \{\dot{R}^* Q + R^* \dot{Q}\} \,=\, Q^{T} \left((R^*)^{T} \dot{R}^*\right) Q + Q^{T} \dot{Q} \,. $$
With $\bfomega \,\doteq\, \left(R^{T} \dot{R}\right)^{\vee}$, $ \bfomega^* \,=\, \left((R^*)^{T} \dot{R}^*\right)^{\vee}$, and $\bfomega_Q \,\doteq\, \left(Q^{T} \dot{Q}\right)^{\vee}$ we have
$$ \bfomega \,=\, Q^T \bfomega^* +  \bfomega_Q \,. $$
Then
$$ \|\bfomega\|^2 = \left\|\bfomega^*\right\|^2 + 2 (\bfomega^*)^T Q \bfomega_Q
+ \left\|\bfomega_Q\right\|^2 $$
Note that $\bfomega^*$ is constant and $Q \bfomega_Q = (\dot{Q} Q^T)^{\vee} = \bfomega_Q^s$ (space-fixed description of angular velocity corresponding to $Q(t)$). Therefore
$$ \int_{0}^{1} (\bfomega^*)^T Q \bfomega_Q dt = (\bfomega^*)^T \int_{0}^{1} \bfomega_Q^s dt \,. $$
For small rotations $Q(t)$ that do not move far from the identity for $t\in[0,1]$, the boundary conditions
$Q(0) = Q(1) = \II$ force
$$  \int_{0}^{1} \bfomega_Q^s dt \,=\, {\bf 0} \,. $$
Moreover, both $\bfomega^*$ and any perturbation $Q$ (whether small or large)
must obey the invariance of the problem. For example, if the start and end points are reversed, we get
$\bfomega^* \rightarrow -\bfomega^*$. And since $Q(0) = Q(1) = \II$, and since $SO(3)$ is a symmetric space, then unless $Q(t)$ executes a full $2\pi$
rotation (which cannot help since the total angle traversed from $R_0$ to $R_1$ is less than $\pi$), symmetry under swapping the roles of the end points indicate that the angular velocity $\bfomega_Q^s$ must integrate to zero. That is, there is no way to do better than $R^*(t)$, and the argument is essentially the same
as for the line in the plane.

\subsubsection{The Case When $\bfomega_0 \,\neq\, 0$}

The E-P equation in (\ref{epso3}) when ${\cal I} = \II$ and $\bfomega_0 \,\neq\, 0$ results in
\beq
\dot{\bfomega} \,-\, \bfomega \times \bfomega_0 \,=\, {\bf 0} \,.
\label{epso3aa}
\eeq
This is a linear system of equations with solution
\beq
\bfomega(t) \,=\, \exp(-t \hat{\bfomega}_0) \bfomega(0)
\,=\, \left(\II - \sin (\|{\bfomega}_0\| t) \frac{\hat{\bfomega}_0}{\|{\bfomega}_0\|}
\,+\, (1-\cos(\|{\bfomega}_0\| t) \frac{\hat{\bfomega}_0^2}{\|{\bfomega}_0\|^2}\right)  \bfomega(0)  \,.
\label{epso3aa1}
\eeq
Since $\exp(t \hat{\bfomega}_0) \, \bfomega_0 \,=\, \bfomega_0$ it follows that
$$ \|\bfomega(t) - \bfomega_0\|^2 \,=\,  \|\bfomega(0) - \bfomega_0\|^2 \,. $$
With initial conditions $R(0) = \II$, the small-time solution for $R(t)$ is then
$$ R(t << 1) = \II + \int_{0}^{t} \hat{\bfomega}(\tau) \, d\tau\,=\,  \II + \hat{\bftheta} $$
where
$$ {\bftheta}(t) \,=\, \int_{0}^{t} {\bfomega}(\tau) \, d\tau\,=\, \left(t \II + [\cos  (\|{\bfomega}_0\| t) -1]\frac{\hat{\bfomega}_0}{\|{\bfomega}_0\|^2}
\,+\, (t|{\bfomega}_0\|-\sin(\|{\bfomega}_0\| t) \frac{\hat{\bfomega}_0^2}{\|{\bfomega}_0\|^3}\right)  \bfomega(0) \,. $$
A small perturbation of this will be $R(t) \exp(\hat{\bfepsilon})$ with $\bfepsilon(0) = \bfepsilon(1) = {\bf 0}$, or
$\bftheta \rightarrow \bftheta + \bfepsilon$, leading to the cost
$\|\bfomega(t) - \bfomega_0 + \dot{\bfepsilon}\|^2$. As with the case of the line in the plane,
this reduces to $\|\bfomega(t) - \bfomega_0\|^2 + \|\dot{\bfepsilon}\|^2$ because
the cross term integrates to zero.

\subsubsection{Geodesics on the Sphere}

A convenient way to describe geodesics connecting two non-colinear points ${\bf a}, {\bf b} \in \mathbb{S}^2$ is via the rotation matrix ${R}({\bf a},{\bf b})$,
which most directly transforms a unit vector ${\bf a}$
into the unit vector ${\bf b}$:
$$ {\bf b} = {R}({\bf a},{\bf b}) {\bf a}. $$
If $\theta_{ab} \in [0,\pi]$ denotes the angle of rotation measured counterclockwise from ${\bf a}$ to ${\bf b}$ around the axis defined by ${\bf a} \times {\bf b}$. Then again using the notation in (\ref{hatveedef}) it can be shown that \cite{harmonic}
\begin{equation}
{R}({\bf a},{\bf b}) = \exp\left(\frac{\theta_{ab}}{\sin\theta_{ab}}\, \widehat{{\bf a} \times {\bf b}}\right)  =
\II + \widehat{{\bf a} \times {\bf b}} \,+\, \frac{(1- {\bf a} \cdot {\bf b})}{\|{\bf a} \times {\bf b}\|^2} \left(\widehat{{\bf a} \times {\bf b}}\right)^2.
\label{transab}
\end{equation}

Then the minimal length geodesic arc connecting ${\bf a}$ and ${\bf b}$ is
$$ {\bf u}(t) = \exp(t \log {R}({\bf a},{\bf b})) {\bf a} = {\bf a}
 + t \left(\widehat{{\bf a} \times {\bf b}}\right) {\bf a} \,+\, t^2 \frac{(1- {\bf a} \cdot {\bf b})}{\|{\bf a} \times {\bf b}\|^2} \left(\widehat{{\bf a} \times {\bf b}}\right)^2 {\bf a} $$
 for $t \in [0,1]$. In the above, the hat notation in (52) is used.

\subsection{Globally Optimal Solutions on Direct and Semi-Direct Products}

Here the concept of $(\theta^*,s^*)$ from optimal curve rolling and reparameterization is generalized
where each variational subproblem is in a subgroup of a larger group, which then are bootstrapped to
obtain solutions on the larger space.

\subsubsection{Direct Products}

Let $G$ and $H$ be a matrix Lie groups with respective Lie algebra basis elements $\{E_i\}$ and
let $\{{\cal E}_j\}$. Given a direct product with elements $(g,h) \in G \times H$ and the direct product group law
$$ (g_1,h_1)(g_2,h_2) = (g_1 g_2, h_1 h_2) \,. $$
Let $\bfomega = (g^{-1} \dot{g})^{\vee}$ and
${\bf v} = (h^{-1} \dot{h})^{\vee}$ denote the velocities. (The reason for this choice of names will become clear soon.)
As an example, the so-called {\it pose change group} \cite{posechange}  is the direct product
$SO(3) \times \IR^3$. This can be described as a matrix Lie group with elements of the form of a direct sum
$$ (R,t) \,=\, R \oplus \left(\begin{array}{cc}
\II & {\bf t} \\
{\bf 0}^T & 1 \end{array}\right)\,\in\, \IR^{7\times7}. $$
In this context
$$ h^{-1} \dot{h} \,=\, \left(\begin{array}{cc}
\II & -{\bf t} \\
{\bf 0}^T & 1 \end{array}\right)
\left(\begin{array}{cc}
\OO & \dot{\bf t} \\
{\bf 0}^T & 0 \end{array}\right) = \left(\begin{array}{cc}
\OO & \dot{\bf t} \\
{\bf 0}^T & 0 \end{array}\right) $$
and ${\bf v} =  (h^{-1} \dot{h})^{\vee} = \dot{\bf t}$.

$SO(3) \times \IR^3$ is \underline{not} the group of handedness preserving Euclidean motions, but it
still is relevant in describing changes in pose. For example, if an aerial vehicle is moving, then it can be
convenient to track its relative orientation and absolute position as described in \cite{quadrotor}. Similarly, for
protein-protein docking this direct product approach is often taken \cite{Kozakov,Vakser,Gray}.

Moreover, if
$(R, {\bf t}) \in SO(3) \times \IR^3$, it is convenient to define body-fixed angular velocity
$\bfomega = (R^T \dot{R})^{\vee}$ and the space fixed view of translational velocity ${\bf v} = \dot{\bf t}$.
Indeed, in the classical expression for kinetic energy
$$ T = \half \bfomega^T {\cal I} \bfomega \,+\, \half m {\bf v} \cdot {\bf v} $$
this is exactly what is done.

It is clear that if the globally minimal solutions to
$$ J_1 = \int_{t_1}^{t_2} f_1(g,\bfomega,t) dt \,\,\,{\rm and}\,\,\, J_2 = \int_{t_1}^{t_2} f_2(h,{\bf v},t) dt $$
are obtained separately, then the globally minimal solution for $J_1+J_2$ on the product group will be obtained by pairing the solutions for each independently as
$(g^*(t), h^*(t))$. In the case of the direct product $SO(3) \times \IR^3$
\beq
(R^*(\tau),{\bf t}^*(\tau)) = \left(R_0 \circ \exp(\tau \cdot \log(R_0^T R_1))\,,\, {\bf t}_0 + ({\bf t}_1 - {\bf t}_0)\tau \right)\,.
\label{geopathdirectprod}
\eeq
Slightly more interesting than that is when the cost is of the form\footnote{The case
$f(g,h,\bfomega,{\bf v}) = f_1(h,{\bf v})\,+\,\|\bfomega - A(h) {\bf v}\|_W^2$ can be handled analogously.}
\beq
f(g,h,\bfomega,{\bf v}) = f_1(g,\bfomega)\,+\,\frac{1}{2} \|{\bf v} - A(g) \bfomega\|_W^2
\label{twisted1}
\eeq
in analogy with the coordinate-dependent discussion of bootstrapping earlier in the paper.
Assuming that $f_1$ satisfies the E-P equation, then the E-P equation for $g$ will reduce to
$$  \cdot \frac{d}{dt} {\bf e}^T A^TW(A \bfomega - {\bf v}) - \bfomega \times (A \bfomega - {\bf v}) =
(A \bfomega - {\bf v}) \tilde{E}_i A \bfomega \,.
$$
and the E-P equation for $h$ will reduce to
$$  \frac{d}{dt} (A \bfomega - {\bf v}) = {\bf 0}\,. $$

The condition for global minimality
$$ {\bf v} = A(g) \bfomega $$
is obtained when $\bfomega(0) = {\bf 0}$.

\subsubsection{Semidirect Products}

Sometimes cost functions such as those in (\ref{twisted1}) arise naturally in the context of semi-direct products
rather than direct. The group of special Euclidean motions
$$ SE(3) = \IR^3 \rtimes SO(3) $$
is such semi-direct product. Its elements reside in the same underlying space as the pose change group, $g_i = (R_i, {\bf t}_i)$, but the
group law is different:
$$ g_1 g_2 = (R_1 R_2, R_1 {\bf t}_2 + {\bf t}_1)\,. $$
The standard way to describe $SE(3)$ as a matrix Lie group is to write its elements as
$$ g \,=\, \left(\begin{array}{cc}
R & {\bf t} \\
{\bf 0}^T & 1 \end{array}\right)\,\in\, \IR^{4\times4}. $$

The corresponding velocities can be written as
$$ \bfxi = (g^{-1} \dot{g})^{\vee} \,=\,
\left(\begin{array}{c}
\bfxi_i \\
\bfxi_2 \end{array}\right) \,=\,
\left(\begin{array}{c}
(R^T \dot{R})^{\vee} \\
R^T {\bf v} \end{array}\right) $$

For example, in modeling the minimal energy shapes of DNA molecules treated as elastic filaments, a common model
to describe the strain energy at the point $s$ along a backbone curve is of the form
$$ f(g,\bfxi) = \half \|\bfxi - \bfxi_0\|_{K} \,. $$
For special cases of $K$, the theory in (\ref{globalLie}) can be applied. The resulting geodesics will be
different than in the direct product case, with the translational parts taking helical paths. In principle, similar expressions can be used
when modeling large elastic deformations in sheets and solids comprised of elastic filaments in multiple directions. Then such a formulation
may be applied to robots powered by transparent elastic actuators, as in \cite{zhujian}.

For special cases of $A(g)$ it is also possible to capture a cost function such as (\ref{twisted1}) in the form
of $\half \|\bfxi\|_{K}$. In particular, writing
$$ \|\bfxi\|_{K} \,=\, \bfxi_1^T K_{11} \bfxi_1 \,+\, \|\bfxi_2 - A_0 \bfxi_1\|_W^2 $$
we recognize that $\bfxi_2 = R^T {\bf v}$, and when $W = c\II$, then
$$ \|\bfxi_1 - A_0 \bfxi_2\|_W^2 = c^2 \|\bfxi_2 - R A_{0}^{-1} \bfxi_1\|^2\,. $$
Then (\ref{twisted1}) can be written as a homogenous problem on $SE(3)$ rather than as a bootstrapped problem
from $SO(3)$ to $SO(3) \times \IR^3$. In other words, we can minimize the bootstrapped problem on
$SO(3) \times \IR^3$ as a homogeneous one on $SE(3)$ with $f(\bfxi) = \half \|\bfxi\|_{K}$ and
where
$$ K \,=\, \left(\begin{array}{cc}
K_{11} & c A_0 \\
c A_0^T & c\II \end{array}\right)\,. $$

%
%

%
%

\subsection{Application: Extraction of Salient Actions from a Video Sequence}

An $M\times N$ black-and-white image consists of $M\cdot N$ pixels, each with fixed location and pixel
intensity (grayscale) values. When there is color,
instead of a scalar, each pixel has a vector of RGB (red, green, blue) values.
A black-and-white video can be viewed as a curve in $\IR^{M\times N}$ parameterized by time,
and an individual image in the video is a point in this space. Similarly, a color video can be thought of as a curve
in $\IR^{M\times N \times 3}$.

Given a video sequence, a problem of importance is activity recognition. For example, if a humanoid robot is charged with taking
care of grandma, the robot should decode grandma's actions and gestures such as waving, walking, and summoning. As such, salient actions need to be extracted from video sequences and matched with stored representatives
of such action classes in a database. A first step in comparing two video sequences $X_1(\tau)$ and $X_2(\tau)$ for
$\tau \in [0,1]$ is to put the salient actions on a common timescale. Suppose that each video illustrates
a human hand waving from left to right and back. One person may wave faster at the beginning and slower at the end, and the other person
may wave slower at the beginning and faster at the end. This means that computing a quantity such as
$\int_{0}^{1} \|X_1(\tau) - X_2(\tau)\|^2 d\tau$ to measure the differences between the videos
will give a large value, and miss the fact that both videos describe essentially the same waving
simply because they are out of sync.
A way to reconcile the differences in the trajectories described by the videos is to assign a different timescale for one
relative to the other.
This is one of the goals of dynamic time warping. The literature on this topic is immense, and some well known articles include
\cite{DTW1,DTW2,DTW3}.

An alternative to dynamic time warping is to optimally reparameterize both of the videos as $X_i(\tau_i(t))$.
Since $dX_i/dt = X'_i(\tau) \dot{\tau}$, a measure of the amount of change in video $i$ at any instant is
$\|\dot{X}_i(\tau)\|^2  = \|X'_i(\tau)\|^2 \dot{\tau}^2$. An optimal timescale $\tau_i^*(t)$ for each video can be defined
as one that expands time when there is a lot of change in the video (i.e., large value of derivative), and compresses time
when nothing is happening (i.e., small value of derivative). In other words, we seek
$X_i(\tau_i^*(t))$ where
$\tau_i^*$ minimizes the functional
$$ J = \int_{0}^{1} \|X'_i(\tau)\|^2 \dot{\tau}^2 dt \,.$$
This is analogous to the curve reparametrization problem reviewed earlier in this paper, and
normalizes the timescale of both videos according to how much is changing in each.
This is the approach taken in \cite{gora1,gora2}.

The curvature of such an arclength parameterized curve in pixel space can be described using a higher-dimensional
version of the Frenet-Serret apparatus, and the analogs of curvature and torsion can be used to identify motion
features as explained in \cite{crane}.

Moreover, rather than considering only the raw images in the video sequence, it is common to
use image processing techniques to detect edges or extract other features that are tracked from frame to frame.
These features undergo motions within the image plane, which can be thought of as being orthogonal to the
temporal direction. Motions can be intrinsic to the the action being observed by the camera, or they
can be a result of nuisances, as would be the case if the camera is shaking when the video is taken, or one
person is waving from a stationary location and another is waving while riding a horse. When the goal is to isolate
actions, the contribution of these nuisance motions should be eliminated as much as possible before comparing videos.
It therefore makes sense to seek the optimal path $g_i(\tau) \in G$ (a group of nuisance motions)
such that $Y_i(\tau) = g_i(\tau) \cdot X_i(\tau)$ is minimally varying.
Minimizing $\int_{0}^{1} \|dY_i/d\tau\|^2 d\tau$ results in an Euler-Poincar\'{e} problem on $G$.
After nuisance motions are removed, the resulting $Y_i^*(\tau) = g_i^*(\tau) \cdot X_i(\tau)$ can be optimally
reparameterized as described above. The resulting motion-corrected and temporally reparameterized
videos $Y_i^*(\tau_i^*(t))$ can then be compared.

In this way a problem reduces to something analogous to the joint $\theta$-$s$ problem in curve framing
(with motion in the image plane being like $\theta(t)$, and compression or expansion of the temporal variable
being like $s(t)$). This joint problem was addressed in \cite{quick}.

\section{Conclusions}

A number of variational calculus problems with globally optimal solutions are reviewed and used
to construct new globally optimal solutions to variational problems on larger spaces. This
``bootstrapping'' approach is demonstrated in the context of simultaneous twist adjustment relative to the Frenet frames of a space curve to give the Bishop frames, together with the reparametrization of the framed curve starting with arclength to result in a framing that evolves more gradually. These ideas are then generalized to classes of coordinate-dependent variational problems and coordinate-free Lie-group problems with globally minimal solutions.
In many cases of interest one underlying space can be endowed with multiple different group
operations (e.g., direct product or semidirect product), and it is shown how different
globally minimal trajectories result from bootstrapping from subgroups to these larger product groups.

\section{Acknowledgements}

This work was performed under National Research Foundation, Singapore, under its Medium Sized Centre Programme - Centre for
Advanced Robotics Technology Innovation (CARTIN), subaward R-261-521-002-592, MOE Tier 2 grant REBOT A-8000424-00-00,
SMI Maritime R+D Grant A-8000081-02-00, CDE Board funds E-465-00-0009-001, and National University of Singapore Startup
grants A-0009059-02-00 and A-0009059-03-00. The author would like to thank Mr. Jikai Ye, Mr. Yuwei Wu, and Ms. Yi Xu for
checking equations.

\section{Data Availability Statement}

There is no data in this paper.


\end{document}